\documentclass[a4paper, 11pt, reqno]{amsart}
\usepackage[english]{babel}
\usepackage[utf8]{inputenc}
\usepackage{amsthm, amsmath, amssymb, amsrefs, enumerate, enumitem, mathtools, mathrsfs}

\usepackage{a4wide}

\usepackage[T1]{fontenc}
\usepackage{lmodern}
\usepackage{microtype}
\usepackage{xcolor}

\usepackage{hyperref}

\usepackage{enumitem}

\theoremstyle{plain}
\newtheorem{thm}{Theorem}[section]
\newtheorem{prop}[thm]{Proposition}
\newtheorem{lemma}[thm]{Lemma}
\newtheorem{cor}[thm]{Corollary}
\theoremstyle{definition}
\newtheorem{defn}[thm]{Definition}
\newtheorem*{exa}{Example}
\theoremstyle{remark}
\newtheorem*{rmk}{Remark}

\newcommand{\C}{\mathbb{C}}
\newcommand{\Hb}{\mathbb{H}}
\newcommand{\Z}{\mathbb{Z}}

\newcommand{\N}{\mathbb{N}}

\newcommand{\slz}{{\text {\rm SL}}_2(\mathbb{Z})}

\newcommand{\Hs}{\mathscr{H}}

\numberwithin{equation}{section}
\setlist{nosep}
\setlist{noitemsep}

\makeatletter
\let\@@pmod\pmod
\DeclareRobustCommand{\pmod}{\@ifstar\@pmods\@@pmod}
\def\@pmods#1{\mkern4mu({\operator@font mod}\mkern 6mu#1)}
\makeatother

\title{Relations between higher level Hurwitz class numbers}
\author{Ngoc Trinh Le}
\address{Department of Mathematics, Tulane University, New Orleans, LA 70118}
\email{tle20@tulane.edu}

\begin{document}

\maketitle

\begin{abstract}
    We connect generalizations of the classical Hurwitz class numbers coming from two different frameworks: one introduced by Pei and Wang, arising from the generalized Cohen--Eisenstein series, and another by Li, Skoruppa, and Zhou, arising from Eichler orders of quaternion algebras. As applications, we obtain new basis for Eisenstein space $E_{3/2}^{+}(4N,\mathrm{id})$, a generalization of recent results of Beckwith and Mono, and a generalization of Gauss' formula.
\end{abstract}

\section{Introduction}
The study of the connections between ternary quadratic forms and class numbers dates back to Gauss's formula for the number of ways of writing an integer $n$ as a sum of three squares. In terms of the Hurwitz class numbers $H(n)$, Gauss' formula can be expressed as 
\begin{equation}\label{eq:threesquares}
\left( \sum_{n \in \mathbb{Z}} q^{n^2} \right)^3= 12 \sum_{ n \ge 0} (H(4n) - 2 H(n)) q^n.
\end{equation}

We prove a level $p$ analogue of \eqref{eq:threesquares}. We note that \eqref{eq:threesquares} was also generalized by Bringmann and Kane \cite{BringmannKane}, who related several ternary theta functions to combinations of Hurwitz class numbers. In a different direction, we replace $H(n)$ with a higher level analogue. To explain this, we let 
$$
Q_{N,n}\coloneqq \left\{ ax^2 + bxy + cy^2 : a,b,c \in \mathbb{Z},\ N \mid a,\ b^2 - 4ac = n \right\}
$$ and $$
\Gamma_0(N)
\coloneqq
\left\{
\begin{pmatrix}
a & b \\
c & d
\end{pmatrix}
\in \mathrm{SL}_2(\mathbb{Z})
\;\middle|\;
c \equiv 0 \mod N
\right\}.
$$
Then $\Gamma_0(N)$ acts on $Q_{N,n}$ via linear substitution. We recall that the Hurwitz class numbers $H(n)$ are defined in terms of the $\operatorname{SL}_2(\mathbb{Z})$ action on $\mathcal{Q}_{1,-n}$. From this perspective, the quantity $$\displaystyle\sum_{Q \in \mathcal{Q}_{p,-n}/\Gamma_0(p)} \frac{2}{|\Gamma_0(p)_Q|}$$  can be viewed as a natural generalization of the Hurwitz class number $H(n)$, where $\Gamma_0(p)_Q$ is the stabilizer of $Q \in Q_{N,n}$ under the $\Gamma_0(p)$ action. Indeed, when $p=1$, this quantity is equal to $H(n)$. Using Theorem \ref{thm:BM4} and \ref{thm:PWtoLZ}, we obtain the following analog of \ref{eq:threesquares}:
\begin{thm}\label{thm:Main4}
Let $p\neq 2$ be a prime number. If $n>0$ with $n \equiv 0,3 \mod 4$, then we have
$$
12\sum_{Q \in \mathcal{Q}_{p,-4n}/\Gamma_0(p)} \frac{2}{|\Gamma_0(p)_Q|}
-
24\sum_{Q \in \mathcal{Q}_{p,-n}/\Gamma_0(p)} \frac{2}{|\Gamma_0(p)_Q|}
=
4r_3(n)
-
2\big(r_3(p^2 n) - p r_3(n)\big)
$$

where $\displaystyle\sum _{n \ge0}r_3(n)q^n\coloneqq\left( \sum_{n \in \mathbb{Z}} q^{n^2} \right)^3$.
\end{thm}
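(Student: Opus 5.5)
The plan is to reduce Theorem~\ref{thm:Main4} to Gauss' identity \eqref{eq:threesquares} together with the classical Hecke-type relation for $r_3$. Write
$$H_p(n)\coloneqq\sum_{Q \in \mathcal{Q}_{p,-n}/\Gamma_0(p)} \frac{2}{|\Gamma_0(p)_Q|}.$$
The first step is to express $H_p(n)$ through ordinary Hurwitz class numbers. I would do this by invoking Theorem~\ref{thm:PWtoLZ}, which identifies the quaternionic (Li--Skoruppa--Zhou) class numbers with the Pei--Wang ones, and then Theorem~\ref{thm:BM4}, which should write the Pei--Wang numbers as an explicit combination of $H(n)$ and $H(n/p^2)$.

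As an independent check, I would recompute this directly. Every $\Gamma_0(p)$-class of forms with $p\mid a$ lies over a unique $\mathrm{SL}_2(\mathbb{Z})$-class. The fibre over the class of $Q$ is in bijection with the $\mathrm{Stab}(Q)$-orbits of the projective zeros of $Q$ modulo $p$ in $\mathbb{P}^1(\mathbb{F}_p)$, since moving $Q$ so that $p\mid a$ is exactly choosing a zero. The weights $2/|\mathrm{Stab}|$ are compatible with this orbit decomposition by orbit--stabilizer. The number of projective zeros is:
\begin{itemize}
\item $1+\left(\frac{-n}{p}\right)$ when $p$ does not divide the content of $Q$ and $p\nmid n$;
\item $1$ when $p\mid n$ but $p\nmid\mathrm{cont}(Q)$;
\item $p+1$ when $p\mid\mathrm{cont}(Q)$.
\end{itemize}
Summing over classes, and splitting off the forms whose content is divisible by $p$ (these correspond to discriminant $-n/p^2$), I expect a formula of the shape
$$H_p(n)=\Big(1+\big(\tfrac{-n}{p}\big)\Big)\big(H(n)-H(n/p^2)\big)+(p+1)H(n/p^2),$$
possibly up to the normalisation fixed in the theorems just cited. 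Here $H(n/p^2)=0$ unless $p^2\mid n$.

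The second step is to substitute this formula with $4n$ and $n$ and form $H_p(4n)-2H_p(n)$. Since $p$ is odd, $\left(\frac{-4n}{p}\right)=\left(\frac{-n}{p}\right)$ and $4n/p^2=4(n/p^2)$. The combination therefore regroups into multiples of $H(4n)-2H(n)$ and $H(4n/p^2)-2H(n/p^2)$. Multiplying by $12$ and applying \eqref{eq:threesquares} to $n$ and to $n/p^2$ turns these into $r_3(n)$ and $r_3(n/p^2)$.

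The third step handles the right-hand side with the classical identity
$$r_3(p^2n)=\Big(p+1-\big(\tfrac{-n}{p}\big)\Big)r_3(n)-p\,r_3(n/p^2).$$
This identity holds because $\theta^3$ is a Hecke eigenform for $T(p^2)$ in weight $3/2$ (alternatively it follows from the Gauss identity and the multiplicativity of $H$ along $p$-powers). With it, the right-hand side becomes
$$\Big(2+2\big(\tfrac{-n}{p}\big)\Big)r_3(n)+2p\,r_3(n/p^2),$$
and the proof reduces to matching coefficients with the output of step two.

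The main obstacle is step one. Specifically, I need to pin down exactly how the stabilizer weights and the orbit counts interact in the cases $p\mid n$ and $p^2\mid n$, since the correct normalisation there determines the constants. For this I would rely on the precise form of Theorem~\ref{thm:BM4} rather than the heuristic count. The coefficient bookkeeping in steps two and three is then routine, separated into the cases $p\nmid n$, $p\parallel n$, and $p^2\mid n$.
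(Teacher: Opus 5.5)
Your route is genuinely different from the paper's, and it works once one missing constant is fixed. That constant is not where you locate it.

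Your orbit--stabilizer count is already exact in every case, including $p\mid n$ and $p^2\mid n$. The $\Gamma_0(p)$-classes over $[Q]$ contribute exactly $2z_p(Q)/|\mathrm{SL}_2(\mathbb{Z})_Q|$, where $z_p(Q)$ is the number of projective zeros of $Q$ mod $p$. The forms $Q=pQ'$ are accounted for by $H(n/p^2)$, since $Q$ and $Q'$ have the same stabilizer.

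What is missing is that $\mathcal{Q}_{p,-n}$ has no positivity condition. Every positive definite class therefore has a distinct negative definite partner $-Q$, with the same stabilizer and the same zeros mod $p$. The worked example with $n=43$, $p=31$ confirms this: it lists $4$ and $16$ classes, negatives included. (With this definition the $p=1$ sum is $2H(n)$, not $H(n)$ as the introduction says.) Hence
\[
H_p(n)=2\Bigl[\bigl(1+\bigl(\tfrac{-n}{p}\bigr)\bigr)\bigl(H(n)-H(n/p^2)\bigr)+(p+1)\,H(n/p^2)\Bigr],
\]
which agrees with the paper's $H_p(n)=4H(n)-2H^{(p,1)}(n)$.

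This factor $2$ is decisive. With it, your step two gives
\[
12\bigl(H_p(4n)-2H_p(n)\bigr)=\bigl(2+2\bigl(\tfrac{-n}{p}\bigr)\bigr)r_3(n)+2p\,r_3(n/p^2),
\]
because $\bigl(\tfrac{-n}{p}\bigr)r_3(n/p^2)=0$ always. This is exactly your step-three expression. With the formula as you displayed it, you would get only half of the right-hand side. You need neither Theorem~\ref{thm:PWtoLZ} nor Theorem~\ref{thm:BM4} for step one. Note also that Theorem~\ref{thm:BM4} expresses the $\Gamma_0(p)$-sum through $H_{1,p}$ and $H_{p,p}$; it does not express Pei--Wang numbers through $H(n)$ and $H(n/p^2)$.

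The paper's route runs as follows:
\begin{itemize}
\item It combines Theorem~\ref{thm:BM4} with Theorem~\ref{thm:BM1.1} to get $H_p(n)=4H(n)-2H_{p,p}(n)$.
\item It rewrites $H_{p,p}=H^{(p,1)}$ via Theorem~\ref{thm:PWtoLZ}.
\item It handles $H(4n)-2H(n)$ by Gauss' formula.
\item It handles the remaining part by Corollary~\ref{cor:4.9} with $N_2=1$, namely $12\bigl(H^{(p,1)}(4n)-2H^{(p,1)}(n)\bigr)=r_3(p^2n)-p\,r_3(n)$. This rests on the local identity $D_p(p^2n)-pD_p(n)=C_p(n)$ and on Luo--Zhou's $r_3(n)=12H^{(2,1)}(4n)$.
\end{itemize}
Your argument replaces the Beckwith--Mono theta-lift input with an elementary coset count. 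It replaces Corollary~\ref{cor:4.9} and the Luo--Zhou input with the classical Hecke relation for $r_3$. The result is self-contained and more elementary. The paper's route is less direct, but it presents the identity as a consequence of its dictionary $H_{p,p}=H^{(p,1)}$ and of the generalized Gauss formula (Proposition~\ref{prop:G1}) for the Li--Skoruppa--Zhou class numbers, which is the purpose of that section.
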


\begin{exa}
When $n=35$ and $p=31$, there are no forms $\mathcal{Q}_{p,-4n}$ or $\mathcal{Q}_{p,-n}$, since $\left( \frac{-35}{31} \right)=-1$ (where $\left(\frac{c}{d}\right)$ denotes the Kronecker symbol). Thus the left-hand-side is $0$. On the other hand, we compute $r_3(35)$ and $r_3(41^2 35) = 2064$, and can check that $4r_3(n)
-
2\big(r_3(p^2 n) - p r_3(n)\big) = 0$.    
\end{exa}
\begin{exa}
Set $n=43$ and $p=31$. Then we compute $r_3(n)=24$ and $r_3(43 \cdot 31^2) = 744$. Hence the right-hand-side of the equation in Theorem \ref{thm:Main4} is 
$$
4 \cdot 24
-
2\big(744 - 31 \cdot 24\big) = 96.
$$
On the other hand, letting $[a,b,c]$ denote $ax^2 + bxy + cy^2$, we compute
$$
Q_{31,-43}/\Gamma_0(31) = \{ [181, 89, 11],
[7463, 573, 11],
[-181, -89, -11],
[-7463, -573, -11] \}
$$
while $Q_{31,-4\cdot43}$ contains the 16 classes
\begin{align*}
&[3484, 774, 43],
[20813, 1892, 43],
[-3484, -774, -43],
[-20813, -1892, -43],\\
&[362, 178, 22],
[14926, 1146, 22],
[-362, -178, -22],
[-14926, -1146, -22],\\
&[3532, 394, 11],
[9197, 636, 11],
[-3532, -394, -11],
[-9197, -636, -11],\\
&[52, 46, 11],
[1889, 288, 11],
[-52, -46, -11],
[-1889, -288, -11].
\end{align*}
The stabilizers for all of these classes are trivial, since the associated fundamental discriminants are less than $-4$. Hence the left-hand-side of Theorem \ref{thm:Main4} is $12 \cdot 16 - 24 \cdot 4 = 12\cdot 8 = 96 $.
\end{exa}

There are many other higher level generalizations of Hurwitz class numbers, including those with natural connections to modular forms. Equation \eqref{eq:threesquares} hints at the modularity of the generating function of $H(n)$, since, if we let $q \coloneqq e^{2\pi i \tau}$, with $ \tau \in \Hb \coloneqq \left\{\tau = u+iv \colon v > 0\right\}$, we may interpret the left hand side of \eqref{eq:threesquares} as a modular form of weight $\frac{3}{2}$. Zagier \cite{Zagier1} showed that the $H(n)$ generating function itself is a mock modular form (a holomorphic function that becomes modular after adding a specific non-holomorphic correction) of weight $\frac{3}{2}$. This understanding allows one to prove various recurrence formulas for the Hurwitz class numbers, such as the well-known Kronecker-Hurwitz class number relations \cite{Kronecker}
    \begin{equation}
    \label{eq:KH}
    \sum_{r\in\mathbb Z}H(4n-r^2)+\lambda_1(n)=2\sigma_1(n),
    \end{equation}
which were generalized by Mertens \cite{Mertens} \cite{Mertens16}, and refined recently by Ortiz, Raum and Richter \cite{ORR} (where $\lambda_s(n) \coloneqq \displaystyle\sum_{ab=n}\max\{a,b\}^s$ and $\sigma_s(n)\coloneqq\displaystyle\sum_{d|n}d^s$).

 Higher weight analogues of $H(n)$ were introduced by Cohen \cite{CohenClassNumbers}, providing a family of weight $\frac{k}{2}$ Eisenstein series for odd $k \ge 5$ whose coefficients generalize $H(n)$.  The generalized Hurwitz class numbers defined by Pei and Wang \cite{pei-wang} extended the results of \cite{Cohen} to $\Gamma_0(N)$ forms for weights greater than $\frac{3}{2}$ and to a holomorphic weight $\frac{3}{2}$. The coefficients, denoted $H_{m,N}(n)$ where $m| N$ (see Section \ref{sec:Eisenstein} for notation), extend the classical Cohen–Eisenstein coefficients by expressing them in a more general form involving special values of $L$-functions and divisor sums. These were recently studied by Beckwith and Mono (\cite{Beckwith-Mono-1}, \cite{Beckwith-Mono-2}), who explicitly related them to so-called sesquiharmonic Maass forms and certain Siegel theta lifts building on seminal work by Duke-Imamo\=glu-T\'{o}th \cite{DIT} and Bruinier-Funke-Imamo\=glu \cite{BFI}.

At the end of Section 1 of \cite{Beckwith-Mono-1}, it w noted that when $N = 5$ or $N=7$, the generating function $\Hs_{m,N}(\tau):=\sum_{n =0}^{\infty} H_{m,N}(n) q^n$ is a multiple of a theta function for a positive definite ternary quadratic form. More specifically, for \begin{align*}
Q_5(x,y,z) &\coloneqq 7x^2 + 3 y^2 + 7 z^2 + 2 xy - 6 xz + 2 yz, \\
Q_7(x,y,z) &\coloneqq 4x^2 + 7 y^2 + 8 z^2 - 4 xz,
\end{align*}
we have
\begin{align}\label{eq:Ex5}
3\Hs_{5,5}(\tau) &= -12\Big(\sum_{n \geq 0} H_{1,1}(n)q^n - \frac{6}{5} \sum_{n \geq 1} H_{1,5}(n)q^n\Big) = \sum_{(a,b,c) \in \Z^3} q^{Q_5(a,b,c)} \\
&= 1 + 2q^3 + 6q^7 + 6q^8 + 8q^{12} + 6q^{15} + O\left(q^{20}\right), \nonumber
\end{align}
and
\begin{align}\label{eq:Ex7}
2\Hs_{7,7}(\tau) &= -12\Big(\sum_{n \geq 0} H_{1,1}(n)q^n - \frac{8}{7} \sum_{n \geq 1} H_{1,7}(n)q^n\Big) = \sum_{(a,b,c) \in \Z^3} q^{Q_7(a,b,c)} \\
&= 1 + 2q^4 + 2q^7 + 4q^8 + 4q^{11} + 8q^{15} + 6q^{16} + O\left(q^{20}\right). \nonumber
\end{align}
This paper will explain how these examples are part of a broader framework.

The technique is to relate the class numbers of \cite{pei-wang} to a different type of generalized class number defined by Li, Skoruppa and Zhou  \cite{LiSkoruppaZhou} . Their ``modified class numbers'' $H^{(N_1,N_2)}(n)$ (see Section \ref{sec:theta-functions} for notations) appear as Fourier coefficients of a weighted sum of Jacobi theta series associated to Eichler orders (here $N_1$, $N_2$ are two odd coprime square-free integers). This comes from a connection with a famous construction of Gross \cite{Gross} of ternary theta functions using maximal orders of quaternion algebras, which has been extended in several papers (e.g. \cite{BoylanSkoruppa}, \cite{li2026}). 

Using the connection between Eichler orders and ternary quadratic forms, Luo and Zhou \cite{LuoZhou} expressed the class numbers of \cite{LiSkoruppaZhou} in terms of ternary quadratic forms.
 Using their formulas for modified class numbers, we can express the values $H_{m,N}(n)$ in terms of representation numbers for ternary quadratic forms. 

Let $N^{odd}$ be any divisor of $N$ with an odd number of prime factors and $G_{4N, 16 N^2, N^{odd}}$ be the sets of equivalence classes of positive definite ternary quadratic forms of level $N$ and discriminant $16N^2$ that are anisotropic at primes $p$ (i.e., the forms have no non-trivial zero in $\mathbb{Q}_p$) only for $p | N^{odd}$. 

\begin{thm}\label{thm:Main}
    Let $N>1$ be an odd square-free integer. Let $m > 1$ be a divisor of $N$ and let $q_{mg}$ be a prime divisor of $m g$ for each $g | \frac{N}{m}$. For all $n \ge 0$, we have
    \begin{equation}\label{eq:Main}
          \frac{2^{\omega(\frac{N}{m})}m}{N} \Hs_{m,N}(\tau)= 2^{\omega(N) + 1} \sum_{g \mid \frac{N}{m}} u(g) T_{mg}(\tau)
    \end{equation}
    where  $\omega(N), u(g)$ are constants (see Theorem \ref{thm:PWtoLZ} for notations),
    \begin{align*}
   & T_{mg}(\tau) =
    &\begin{cases}
    \displaystyle  \sum_{Q \in G_{4N, 16N^2, mg}} \frac{\theta_Q(\tau)}{|\operatorname{Aut}(Q)|} &  \text{if } \mu(mg) = -1 \\[4ex]
    \displaystyle 2\sum_{Q \in G_{4 \frac{N}{q_{mg}}, 16
    \big(\frac{N}{q_{mg}}\big)^2,\frac{mg}{q_{mg}}}} \frac{ \theta_Q(\tau)}{|\operatorname{Aut}(Q)|} - \sum_{Q \in G_{4N, 16N^2,\frac{mg}{q_{mg}}}} \frac{\theta_Q(\tau)}{|\operatorname{Aut}(Q)|} & \text{if } \mu(m g) = 1,
   \end{cases}
    \end{align*}
    and $\theta_Q(\tau)=\displaystyle\sum_{(a,b,c)\in\mathbb{Z}^3}q^{ Q(a,b,c)}$.
\end{thm}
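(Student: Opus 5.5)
The argument has two stages. First, Theorem \ref{thm:PWtoLZ} expresses the Pei--Wang series $\Hs_{m,N}$ as a combination of Li--Skoruppa--Zhou modified class number series. Then the Luo--Zhou formulas rewrite each modified class number series as a weighted sum of ternary theta series.

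Concretely, for $m>1$ with $m\mid N$, Theorem \ref{thm:PWtoLZ} should give, after the normalization $\frac{2^{\omega(N/m)}m}{N}$,
$$
\frac{2^{\omega(\frac{N}{m})}m}{N}\,\Hs_{m,N}(\tau) = \sum_{g\mid \frac{N}{m}} u(g)\,\mathcal{H}^{(mg,\,N/(mg))}(\tau),
$$
where $\mathcal{H}^{(N_1,N_2)}(\tau)=\sum_n H^{(N_1,N_2)}(n)q^n$. I would quote this identity as stated and check only that the constants $u(g)$ match those in \eqref{eq:Main}. The mechanism behind it is to compare the Euler-factor/$L$-value expressions of $H_{m,N}(n)$ with those of $H^{(N_1,N_2)}(n)$. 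The correction factors at primes $\ell\mid N$, of shape $(1-\ell)$ or $(1+\ell)$ depending on whether $\ell\mid m$, are then inverted by Möbius-type inclusion--exclusion over $g\mid N/m$. That inversion is exactly what produces the sum over $g$.

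Next I would apply the Li--Skoruppa--Zhou/Luo--Zhou description term by term. Theta series attached to Eichler orders of level $N_2$ in the definite quaternion algebra ramified exactly at $N_1$ and $\infty$ correspond, via the trace-zero ternary lattice, to the genus $G_{4N,16N^2,N_1}$. This gives the case $\mu(mg)=-1$:
$$
\mathcal{H}^{(mg,N/(mg))}(\tau)=2^{\omega(N)+1}\sum_{Q\in G_{4N,16N^2,mg}}\frac{\theta_Q(\tau)}{|\operatorname{Aut}(Q)|}.
$$
Here I would take the constant $2^{\omega(N)+1}$ from the mass formula (the Eichler mass times the weight normalizations).

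When $\mu(mg)=1$, no definite quaternion algebra is ramified exactly at the primes of $mg$, because $mg$ has an even number of prime factors. In this case I would use the Luo--Zhou relation that handles this by splitting off one prime $q=q_{mg}$. Concretely, $H^{(N_1,N_2)}$ is expressed as twice the level-$N/q$ term with ramification $mg/q$, minus the level-$N$ term with ramification $mg/q$. This matches the stated form of $T_{mg}$. I would justify it by comparing local factors at $q$: the Eichler order of level $q$ versus the maximal order at $q$, and the sign flip of the local factor at $q$ when $q$ moves from $N_1$ to $N_2$. Because the result is formulated with an arbitrary prime $q_{mg}$, I would also check that the local computation is independent of the choice of $q$.

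Summing over $g$ then yields \eqref{eq:Main}. The main obstacle will be the bookkeeping of normalizations, namely the powers of $2$, the factor $\frac{m}{N}$, and the identification of $|\operatorname{Aut}(Q)|$ with the unit group orders of the orders. The most error-prone point is the $\mu(mg)=1$ case at $n=0$ and at $n$ divisible by $q$. I would verify the result against \eqref{eq:Ex5} and \eqref{eq:Ex7}, that is, $N=m=5$ and $N=m=7$, where the genus has a single class.
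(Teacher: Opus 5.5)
Your route is the paper's route. You apply Theorem \ref{thm:PWtoLZ}, then Theorem \ref{thm:L&Z} directly when $\mu(mg)=-1$. When $\mu(mg)=1$ you use the splitting $H^{(N_1,N_2)}=2H^{(N_1/q,N_2)}-H^{(N_1/q,qN_2)}$, which is Lemma \ref{lem:L&Z}. Its content is the local identity $2D_q-B_q=C_q$ from Lemma \ref{lem:linearlocalfactor}(4), plus the trivial check $2\cdot 1-(q+1)=1-q$ at $n=0$.

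The step that fails is your sentence ``this matches the stated form of $T_{mg}$.'' You are identifying the coefficient $2$ in the class-number splitting with the coefficient $2$ in $T_{mg}$. But the two series are turned into theta sums by Theorem \ref{thm:L&Z} at different levels, and at level $4N/q$ the normalizing constant is $2^{\omega(N/q)+1}=2^{\omega(N)}$, not $2^{\omega(N)+1}$. Put $S_1=\sum_{Q\in G_{4N/q,16N^2/q^2,N_1/q}}\theta_Q/|\operatorname{Aut}(Q)|$ and $S_2=\sum_{Q\in G_{4N,16N^2,N_1/q}}\theta_Q/|\operatorname{Aut}(Q)|$. Then
\[
\Hs^{(N_1,N_2)}(\tau)=2\cdot 2^{\omega(N)}S_1-2^{\omega(N)+1}S_2=2^{\omega(N)+1}\left(S_1-S_2\right).
\]
The $2$ is absorbed, so your argument actually yields $T_{mg}=S_1-S_2$, not $2S_1-S_2$. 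The paper's Proposition \ref{prop:L&Z} makes the same slip: its coefficient $2^{\omega(N)+2}$ should be $2^{\omega(N)+1}$. Consequently the identity as printed fails, already at $n=0$, as soon as some $g\mid \frac{N}{m}$ has $\mu(mg)=1$. This is because every $u(g)>0$ and every $S_1$ has positive constant term.

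For a concrete check take $N=m=35$ and $n=0$. The left side is $L_{35}(-1,\mathrm{id})=-\frac{1}{12}(1-5)(1-7)=-2$, and the right side is $8\,T_{35}$.
\begin{itemize}
\item With $q=7$, the constant terms of $S_1$ and $S_2$ are $2^{-2}H^{(5,1)}(0)=\frac{1}{12}$ and $2^{-3}H^{(5,7)}(0)=\frac13$. Then $8(S_1-S_2)$ gives $-2$, while $8(2S_1-S_2)$ gives $-\frac43$.
\item With $q=5$, the constant terms are $\frac18$ and $\frac38$. Then $8(2S_1-S_2)$ gives $-1$.
\end{itemize}
So the printed $T_{mg}$ is not even independent of $q_{mg}$. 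Your proposed independence-of-$q$ check would have exposed this. Your sanity checks at $N=5,7$ cannot, since they only exercise the $\mu(mg)=-1$ branch. With $T_{mg}=S_1-S_2$, which equals $2^{-\omega(N)-1}\Hs^{(mg,N/(mg))}$ and is therefore independent of $q$, your argument goes through, and so does the paper's.
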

 
\begin{exa}
    The examples \eqref{eq:Ex5} and \eqref{eq:Ex7} correspond to cases where $m=N=5$ (and $m=N=7$ respectively). In those cases, the right hand side of \ref{eq:Main} contains a single genus containing exactly one equivalence class. Representatives for these equivalence classes are given by $Q_5$ and $Q_7$ respectively. (See Section \ref{sec:Maintm} for examples where $m=N=11$ and $m=N=35$).
\end{exa}

On the other hand, by the Siegel--Weil formula (\cite{Weil1964},  \cite{Weil1965}), the Siegel weighted genus average is identified with an Eisenstein series of weight $3/2$. Restricting to Kohnen's plus subspace, one obtains a basis for the Eisenstein subspace $E^{+}_{3/2}(4N,\mathrm{id})$ (see Section \ref{sec:halfintegral} for notation) in terms of these Siegel weighted genus averages, as stated in the following theorem. In particular, the examples \eqref{eq:Ex5} and \eqref{eq:Ex7} arise as special cases of this proposition.
\begin{prop}\label{thm:main2}
    Let $N$ be an odd square-free integer. Then the set $\{T_{m,N}(\tau)\}_{\substack{m \mid N \\ m \neq 1}}$ is a basis of $E^{+}_{3/2}(4N,\mathrm{id})$. In particular, let $p >2$ be a prime number. Then
\[
E^{+}_{3/2}(4p,\mathrm{id})
=
\left\langle
T_{p,p}(\tau)
=
\frac{1}{2^{\omega(N)+1}}\Hs_{p,p}(\tau)
\right\rangle.
\]
Moreover, for all ternary quadratic forms $Q(x,y,z)$ of level $4p$ and discriminant $(4p)^2$, we have that
\[
\theta_Q (\tau) - \frac{1}{L_p(-1,\mathrm{id})} \sum_{n =0}^{\infty} H_{p,p}(n) q^n
\]
is a weight $\frac{3}{2}$ cusp form of level $4p$, where $L_p(-1,\mathrm{id})$ is the central $L$-value (see Section~\ref{sec:Eisenstein} for notation).
\end{prop}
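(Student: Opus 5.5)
The plan is to combine the identification in Theorem \ref{thm:Main} with known structural facts about the Eisenstein space. First I will fix the meaning of $T_{m,N}(\tau)$. It is the weighted combination $\sum_{g\mid N/m} u(g)T_{mg}(\tau)$ of genus theta averages appearing on the right of \eqref{eq:Main}, up to a normalizing constant. By Theorem \ref{thm:Main}, $T_{m,N}$ equals a nonzero constant multiple of $\Hs_{m,N}(\tau)$, namely $\frac{2^{\omega(N/m)}m}{N\,2^{\omega(N)+1}}\Hs_{m,N}$. The basis claim therefore reduces to showing that $\{\Hs_{m,N}\}_{m\mid N,\,m\neq 1}$ is a basis of $E^{+}_{3/2}(4N,\mathrm{id})$.

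Next I will show these functions lie in the space and are linearly independent. From Pei--Wang, each $\Hs_{m,N}$ with $m>1$ is a holomorphic Eisenstein series of weight $3/2$ in Kohnen's plus space of level $4N$. Only $m=1$ requires the nonholomorphic correction, which is why it is excluded. Independently, the Siegel--Weil formula identifies each genus average $\sum_{Q}\theta_Q/|\operatorname{Aut}(Q)|$ with a weight $3/2$ Eisenstein series. Since $T_{mg}$ is a linear combination of such averages, $T_{m,N}$ is orthogonal to cusp forms, and the plus-space condition follows from the congruence $Q(a,b,c)\equiv 0,3 \pmod 4$ for these discriminant $16N^2$ forms.

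For the dimension count, I will use the standard description of the Eisenstein space: $E_{3/2}(4N)$ corresponds to cusps, and the Kohnen plus space has dimension equal to the number of divisors of $N$ minus one. Equivalently, under the Kohnen--Shimura correspondence it matches $E_2(N)$, whose dimension is $\sigma_0(N)-1$ for square-free $N$. The count of $m\mid N$ with $m\neq1$ is exactly $\sigma_0(N)-1$, so linear independence is what remains. For that I intend to use the Pei--Wang eigen-decomposition. The $\Hs_{m,N}$ have distinct local behaviour: at primes $p\mid m$ versus $p\nmid m$ their coefficients $H_{m,N}(p^2n)$ satisfy different Hecke eigenvalue relations under $T_{p^2}$ (eigenvalues $1$ versus $p$, say). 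Distinct eigencharacters under the commuting operators $T_{p^2}$, $p\mid N$, force independence. I expect this step to be the main obstacle, because it requires extracting the precise $p$-local factors of $H_{m,N}$ from Pei--Wang's formula.

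Finally, for $N=p$ the space is one-dimensional, spanned by $T_{p,p}=2^{-\omega(N)-1}\Hs_{p,p}$, since $u(1)=1$. Take any form $Q$ of level $4p$ and discriminant $(4p)^2$. Its theta series $\theta_Q$ lies in $M^{+}_{3/2}(4p)$, and its Eisenstein projection equals its genus average, which is a multiple $c\,\Hs_{p,p}$. Comparing constant terms gives $\theta_Q(\tau)$ with constant term $1$ and $\Hs_{p,p}$ with constant term $H_{p,p}(0)=L_p(-1,\mathrm{id})$. Hence $c=1/L_p(-1,\mathrm{id})$, and $\theta_Q-\frac{1}{L_p(-1,\mathrm{id})}\Hs_{p,p}$ is cuspidal.
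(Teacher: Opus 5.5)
The gap is in how you fix $T_{m,N}$. In the paper, $T_{m,N}$ is the level-$N$ function $T_m$ of Theorem~\ref{thm:Main} itself. The proof of Theorem~\ref{thm:Main} ends with $\sum_{n\ge0}H^{(mg,N/mg)}(n)q^n=2^{\omega(N)+1}T_{mg,N}(\tau)$, and the Remark after the proposition restates the first claim as a basis statement for $\{\Hs^{(m,N/m)}\}_{1<m\mid N}$. So $T_{m,N}=2^{-\omega(N)-1}\Hs^{(m,N/m)}$ is a genus sum (or a combination of two), not $\sum_{g\mid N/m}u(g)T_{mg}$. It is also not proportional to $\Hs_{m,N}$ when $m<N$: already $H^{(m,N/m)}(0)\neq 0=H_{m,N}(0)$.

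Under your reading the first claim is literally Theorem~\ref{thm:PW}, and nothing in your argument shows that the genus averages themselves are independent and span. The missing step is to invert Theorem~\ref{thm:Main}. For $m>1$ it expresses $\Hs_{m,N}$ through the $T_{d,N}$ with $m\mid d\mid N$, all with $d>1$. By Theorem~\ref{thm:PW} this gives $E^{+}_{3/2}(4N)=\operatorname{span}\{\Hs_{m,N}\}_{1<m\mid N}\subseteq\operatorname{span}\{T_{d,N}\}_{1<d\mid N}$. There are exactly $2^{\omega(N)}-1=\dim E^{+}_{3/2}(4N)$ such $T_{d,N}$, so they form a basis. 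Equivalently, the transition matrix is triangular for divisibility with nonzero diagonal entries $u(1)$, or you can use Theorem~\ref{thm:LZtoPW}. That is the paper's one-line argument.

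Your dimension count plus $U_{p^2}$-eigenvalue independence for the $\Hs_{m,N}$ is correct. By Proposition~\ref{thm:Localexp}, for $p\mid N$ the eigenvalue is $1$ if $p\mid m$ (the factor $C_p$ does not see $f_p$) and $p$ if $p\mid N/m$ ($A_p$ is linear in $f_p$). But this only re-derives Theorem~\ref{thm:PW}, whose dimension formula you are citing anyway.

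For $N=p$ the two readings coincide (here $u(1)=1$), and your cusp-form argument is essentially the paper's, with two differences:
\begin{itemize}
\item To put $\theta_Q$ in the plus space you assert $Q(a,b,c)\equiv 0,3\pmod{4}$ without proof. This is true for these Gross-type lattices, but it needs an argument. The paper instead notes that the $R_Q(n)$ are nonnegative and their weighted genus sum $\frac14 H_{p,p}(n)$ vanishes for $n\equiv 1,2\pmod{4}$, which requires no knowledge of the forms.
\item Your constant-term comparison $c\,L_p(-1,\mathrm{id})=1$ makes explicit the scalar that the paper leaves implicit.
\end{itemize}
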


\begin{exa}
    Consider $p=3, 5,7, 13$. The cusp form in Proposition \ref{thm:main2} is 0 simply because $S^+_{3/2}(4p)=\{0\}$ for these values of $p$.
\end{exa}
\begin{exa}
Consider $p=11$. Sage produces the following quadratic form of level $4\cdot 11$ and $4^2\cdot11^2$:
$$
Q(x,y,z)=3 x^2 -  2xy - 2xz + 15 y^2 +15 z^2 - 14 yz
$$
Computing the representation numbers using Sage, we find that
$$
\theta_Q (\tau) = 1 + q^3 + 2 q^{12} + 6 q^{15} + 6 q^{16} + 6 q^{20} + 6 q^{23} + 2 q^{27} + 6 q^{31} + 6 q^{36} + 6 q^{44} + 12 q^{47} + 8 q^{48} + \cdots 
$$
Proposition \ref{thm:main2} tells us that 
$$
\theta_Q(\tau) -\frac{6}{5} \sum H_{11,11}(n) q^n \in S_{\frac{3}{2}}^+ (4 \cdot 11).
$$
By the work of Shimura \cite{Shimura}, this space is isomorphic under the Shimura lifts to the 1-dimensional space $S_2(11) = \eta(2\tau)^2 \eta(11 \tau)^2 \mathbb{C}$ and we have $ S_{\frac{3}{2}}^+ (4 \cdot 11) = f \mathbb{C}$, where
 $$
f (\tau) = ( \theta (11 \tau) \eta(2\tau) \eta(22 \tau) ) | U_4 = q^3 - q^4 -q^{11} - q^{12}  + q^{15} + 2 q^{16} +q^{20} - q^{23} - q^{27} - q^{31} + q^{44} + q^{55} + \cdots 
 $$
 by [\cite{Bruinier-Kohnen}, $\S 4$ Example 2] and 
     \[
    \Big(\sum_{n=0}^\infty a(n)\, q^n\Big)|U_4\coloneqq\; \sum_{n=0}^\infty a(4n)\, q^n.
    \]
 We obtain the relation
 $$
 \theta_Q(\tau) - \frac{6}{5} \sum H_{11,11}(n) q^n  = \frac{1}{5} f(\tau).
 $$
\end{exa}

Theorems \ref{thm:Main4} and Theorem \ref{thm:Main} are proved using Theorem \ref{thm:PWtoLZ}. The key idea is to express both generalized class numbers in terms of their local factors. We show that these local factors satisfy certain explicit linear relations. As a consequence, we can write $H_{m,N}(n)$ as a linear combination of $H^{(m,\frac{N}{m})}(n)$, with explicit formulas for the coefficients (and vice versa).

\begin{thm}\label{thm:PWtoLZ}
    Let $N>1$ be an odd square-free integer and $m$ be a divisor of $N$. Then, for $n \ge 0$, we have
    $$
    \frac{2^{\omega(\frac{N}{m})} m}{N} H_{m,N}(n) = \sum_{g |\frac{N}{m}} u(g) H^{(mg, \frac{N}{mg})}(n).
    $$ 
    where
    $\omega(N) =\displaystyle\sum_{p \text{ prime}, \ p \mid N} 1$ counts the distinct prime divisors of $N$ and
    \begin{align*}
        u(g) = \left( \displaystyle\prod_{p \mid g} \frac{1}{p-1} \right) \left( \displaystyle\prod_{p \mid \frac{N}{mg}} \frac{1}{p+1} \right).
    \end{align*}
    In particular, we have $H_{N,N}(n)=H^{(N,1)}(n)$.
\end{thm}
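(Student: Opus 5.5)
Both sides will be written as products of local factors over the primes $p \mid N$, and the identity will then follow from a linear relation among those local factors. Fix $n$ and write $-n = D f^2$ with $D$ a fundamental discriminant (the case $n=0$, and $n$ not congruent to $0,3 \bmod 4$, will be handled separately by comparing constant terms and observing that both sides vanish). From the definitions in Section \ref{sec:Eisenstein}, $H_{m,N}(n)$ should take the shape $H(n)\prod_{p\mid N}\alpha_p^{(m)}(n)$. Here $H(n)$ is the classical Hurwitz class number, and hence a multiple of $L(0,\chi_D)$ times a divisor sum in $f$. The factor $\alpha_p^{(m)}(n)$ depends only on whether $p\mid m$ or $p \mid N/m$ and on the $p$-adic data $(\nu_p(f), \chi_D(p))$; the correction comes from removing Euler factors at $p$ from the $L$-value and the divisor sum. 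Similarly, the formula of Li--Skoruppa--Zhou (in the form recalled in Section \ref{sec:theta-functions}) gives
\[
H^{(N_1,N_2)}(n)=H(n)\prod_{p\mid N_1}\beta_p^{-}(n)\prod_{p\mid N_2}\beta_p^{+}(n),
\]
where $\beta_p^{\mp}$ are the local correction factors for ramified and unramified Eichler-type local conditions at $p$. They are expressions like $1-\chi_D(p)$ or $1+\chi_D(p)$, adjusted by powers of $p$ when $p\mid f$.

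With these product formulas in hand, the right side of the theorem factors. The coefficient $u(g)$ is multiplicative in the primes of $N/m$: each $p\mid N/m$ contributes $\frac{1}{p-1}$ if $p\mid g$ and $\frac{1}{p+1}$ otherwise. Hence
\[
\sum_{g\mid \frac{N}{m}}u(g)\,H^{(mg,\frac{N}{mg})}(n)=H(n)\prod_{p\mid m}\beta_p^{-}(n)\prod_{p\mid \frac{N}{m}}\Big(\tfrac{1}{p-1}\beta_p^{-}(n)+\tfrac{1}{p+1}\beta_p^{+}(n)\Big).
\]
The left side is $\frac{2^{\omega(N/m)}m}{N}H(n)\prod_p\alpha_p^{(m)}(n)$. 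Distributing the constant $\frac{m}{N}2^{\omega(N/m)}=\prod_{p\mid N/m}\frac{2}{p}$ prime by prime reduces the theorem to two local identities:
\[
\alpha_p^{(m)}=\beta_p^{-} \ \text{ for } p\mid m,\qquad \tfrac{2}{p}\,\alpha_p^{(m)}=\tfrac{1}{p-1}\beta_p^{-}+\tfrac{1}{p+1}\beta_p^{+} \ \text{ for } p\mid \tfrac{N}{m}.
\]
The special case $H_{N,N}=H^{(N,1)}$ is then immediate, since $N/m=1$ and only $g=1$ occurs, with $u(1)=1$.

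The main obstacle is checking these local identities. The difficulty is that Pei--Wang's $H_{m,N}$ is defined through $L$-values with Euler factors stripped and twisted divisor sums (with Möbius-type weighting over divisors of $m$ and $N/m$), so extracting a clean per-prime factor $\alpha_p^{(m)}$ requires first rewriting their formula multiplicatively. I would do this first, carefully and including the normalizing constants, since the prefactor $\frac{2}{p}$ must come out exactly. Then I would verify each identity by cases: $p\nmid f$ with $\chi_D(p)\in\{-1,0,1\}$, and $p^k\| f$ with $k\ge1$, in which case the factors are geometric sums in $p$. In the last case a telescoping computation should show the identity holds uniformly in $k$. Finally, I would sanity-check the result against the examples \eqref{eq:Ex5} and \eqref{eq:Ex7}.
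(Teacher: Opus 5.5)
Your proposal is correct and follows essentially the paper's route. The paper writes both class numbers as $L(0,\chi_{D_n})\prod_{p\nmid N}D_p(n)$ times local factors, $C_p(n)$ for $p\mid m$ and $A_p(n)$ (resp. $B_p(n)$) for $p\mid N/m$; your second local identity is exactly Lemma \ref{lem:linearlocalfactor}(2), $A_p=\frac{p}{2(p+1)}B_p+\frac{p}{2(p-1)}C_p$, and the product over $p\mid N/m$ is then expanded as you describe. The only differences are that you normalize by $H(n)$, which is harmless since each $D_p(n)>0$, and that the $n=0$ case, left implicit in your plan, comes down in the paper to the cancellation $\sum_{g\mid N/m}\mu(g)=0$ for $m<N$.
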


Combining Theorem \ref{thm:PWtoLZ} with the known modular properties of the $H_{m,N}(n)$ generating function, we deduce that the modular properties of the $\Hs^{(m,\frac{N}{m})}(\tau)$ functions are as follows:
\begin{cor}\label{thm:harmonic-Maass-form}
Let $N$ be odd and square-free.  When $m \neq 1$, $\Hs^{(m,\frac{N}{m})}(\tau) \in E_{\frac{3}{2}}^+(4N)$. When $m=1$, we have that
    $$
    \frac{1}{2^{\omega(N)}} \Hs^{(1,N)}(\tau) + \frac{1}{8\pi\sqrt{v}} + \frac{1}{4\sqrt{\pi}} \sum n \Gamma\left(-\frac{1}{2}; 4\pi n^2 v\right) q^{-n^2}.
    $$ 
    is a harmonic Maass form (see Section \ref{sec:Modularity} for the definition) of weight $\frac{3}{2}$ with respect to $\Gamma_0(4N)$. 
\end{cor}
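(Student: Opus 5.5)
We invert the linear relation of Theorem \ref{thm:PWtoLZ} to express each $\Hs^{(m,\frac{N}{m})}(\tau)$ as a linear combination of the generating functions $\Hs_{m',N}(\tau)$ with $m \mid m' \mid N$. Then we read off modularity from the known properties of the generalized Cohen--Eisenstein series of Pei and Wang.

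The relation in Theorem \ref{thm:PWtoLZ} is a triangular system over the divisor lattice of $N$. For fixed $N$ and $m \mid N$, the quantity $H_{m,N}$ involves only $H^{(m',\frac{N}{m'})}$ with $m \mid m'$, and the coefficient of $H^{(m,\frac{N}{m})}$ itself is $u(1)=\prod_{p\mid \frac{N}{m}}\frac{1}{p+1}\neq 0$. Because the coefficients $u(g)$ are multiplicative in the primes of $\frac{N}{m}$, the system factors prime by prime. Möbius-type inversion over divisors of $\frac{N}{m}$ therefore gives an explicit formula
\[
H^{(m,\frac{N}{m})}(n)=\sum_{g\mid \frac{N}{m}} c_{m}(g)\, \frac{2^{\omega(\frac{N}{mg})}mg}{N}H_{mg,N}(n)
\]
with nonzero constants $c_m(g)$. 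For each prime $p \mid \frac{N}{m}$, inverting the $2\times2$ block $\left(\begin{smallmatrix}\frac{1}{p+1}&\frac{1}{p-1}\\0&1\end{smallmatrix}\right)$ gives $c_m(g)=\prod_{p\mid g}\bigl(-\frac{p+1}{p-1}\bigr)\prod_{p\mid\frac{N}{mg}}(p+1)$. We check that this respects the constant term $n=0$ as well, since Theorem \ref{thm:PWtoLZ} holds for all $n\ge0$.

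\emph{Case $m\neq1$.} Every $\Hs_{m',N}$ appearing in the inversion has $m'=mg>1$. By the results of Pei--Wang cited in Section \ref{sec:Eisenstein}, these are holomorphic weight $\frac32$ Eisenstein series in Kohnen's plus space of level $4N$. A linear combination of them therefore lies in $E^+_{3/2}(4N)$.

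\emph{Case $m=1$.} The inversion formula contains exactly one term with $m'=1$, namely $g=1$. Its coefficient is $c_1(1)\frac{2^{\omega(N)}}{N}=\prod_{p\mid N}(p+1)\cdot\frac{2^{\omega(N)}}{N}$. All remaining terms are holomorphic Eisenstein series as in the first case, so they do not affect harmonicity or modularity. The generating function $\Hs_{1,N}$ is the level-$N$ analogue of Zagier's class number series. By Pei--Wang and Beckwith--Mono, it becomes a harmonic Maass form of weight $\frac32$ on $\Gamma_0(4N)$ after adding a nonholomorphic correction proportional to Zagier's
\[
\frac{1}{8\pi\sqrt v}+\frac{1}{4\sqrt\pi}\sum n\,\Gamma\!\left(-\tfrac12;4\pi n^2v\right)q^{-n^2}.
\]
The proportionality constant depends on $N$. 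Multiplying through and normalizing by $2^{-\omega(N)}$ should produce exactly the stated correction with coefficient $1$.

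The main obstacle is this normalization in the case $m=1$. We need the precise scalar by which Pei--Wang's $\Hs_{1,N}$ is related to Zagier's series, or equivalently its shadow, which should be the level-$N$ restriction of $\theta$ with a factor like $\prod_{p\mid N}(1+p)^{-1}$ or $N/\prod(p+1)$. We must then verify that this scalar cancels $c_1(1)\frac{2^{\omega(N)}}{N}$ against the normalization in $\Hs^{(1,N)}$ to leave coefficient $2^{-\omega(N)}$. Our first approach is to compare constant terms: $H_{1,1}(0)=-\frac1{12}$, the Eisenstein components contribute known constants, and the $\frac{1}{8\pi\sqrt v}$ term is tied to the constant term through the $\xi$-operator. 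As a cross-check we use $N=p$ prime, where the relation is $\frac{2}{p}H_{1,p}=\frac{1}{p+1}H^{(1,p)}+\frac1{p-1}H^{(p,1)}$.
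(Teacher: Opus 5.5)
Your case $m\neq 1$ is correct and is the paper's argument. Inverting Theorem \ref{thm:PWtoLZ} prime by prime is exactly Theorem \ref{thm:LZtoPW}: your $c_m(g)$ divided by $\prod_{p\mid N/m}(p+1)$ is $t(g)$. Every $\Hs_{mg,N}$ that occurs has $mg>1$.

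For $m=1$ your skeleton also matches the paper's, but the proposal stops exactly where the content of the claim lies. Write $Z(\tau)$ for Zagier's correction $\frac{1}{8\pi\sqrt v}+\frac{1}{4\sqrt\pi}\sum_{n\ge1}n\,\Gamma(-\frac12;4\pi n^2v)q^{-n^2}$. Knowing only that $\Hs_{1,N}$ plus \emph{some} multiple of $Z$ is harmonic, you get that $2^{-\omega(N)}\Hs^{(1,N)}+cZ$ is harmonic for an unspecified $c$. The corollary is precisely the assertion $c=1$, and you leave it at ``should''.

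Your suggested way of fixing the scalar would not work either. Under $\xi_{3/2}$ the $v^{-1/2}$ term is sent to a constant, so its coefficient just encodes the constant term of the shadow, i.e.\ the unknown $c$ itself. Nothing at the cusp $\infty$ ties the constant term of the holomorphic part to $c$. That constant term can be changed freely, without changing the shadow, by adding holomorphic Eisenstein series (e.g.\ $\Hs_{N,N}$, with constant term $L_N(-1,\mathrm{id})\neq 0$).

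The missing input is the exact normalization in Beckwith--Mono's Theorem 1.2, which the paper quotes: $\frac1N\sum_{n\ge1}H_{1,N}(n)q^n+\prod_{p\mid N}(p+1)^{-1}Z$ is harmonic on $\Gamma_0(4N)$. So your second guess is the right one: the correction for $\Hs_{1,N}$ is $\frac{N}{\prod_{p\mid N}(p+1)}Z$. In Theorem \ref{thm:PWtoLZ} with $m=1$, the $g=1$ coefficient is $2^{-\omega(N)}u(1)=2^{-\omega(N)}\prod_{p\mid N}(p+1)^{-1}$. After removing the holomorphic $g>1$ terms, the same factor $\prod_{p\mid N}(p+1)^{-1}$ multiplies both $2^{-\omega(N)}\Hs^{(1,N)}$ and $Z$, and dividing it out gives the statement. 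In your inverted form, $2^{-\omega(N)}\Hs^{(1,N)}=\frac{\prod_{p\mid N}(p+1)}{N}\Hs_{1,N}+(\text{holomorphic})$, and $\prod_{p\mid N}(p+1)$ times the quoted form is exactly $\frac{\prod_{p\mid N}(p+1)}{N}\Hs_{1,N}+Z$.

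Alternatively, you can avoid relying on that normalization. Part (4) of Lemma \ref{lem:linearlocalfactor} gives $H(n)=2^{-\omega(N)}\sum_{m\mid N}H^{(m,N/m)}(n)$ for $n\ge1$, and at $n=0$ both sides equal $-\frac1{12}$ by direct computation. Hence
\[
2^{-\omega(N)}\Hs^{(1,N)}+Z=\bigl(\Hs_{1,1}+Z\bigr)-2^{-\omega(N)}\sum_{1<m\mid N}\Hs^{(m,N/m)},
\]
which is harmonic on $\Gamma_0(4N)$ by Zagier's theorem and your case $m\neq 1$.
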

\begin{rmk}
    Proposition \ref{thm:harmonic-Maass-form} says that $\Hs^{(1,N)}(\tau)$ is a weight-$3/2$ mock modular form for $\Gamma_0(4N)$ (see Section \ref{sec:corollaries} for the definition).
\end{rmk}

The method used to prove Theorem~\ref{thm:PWtoLZ} is also useful for proving relations among the class numbers $H_{m,N}(n)$. For example, we prove a generalization of Theorem~1.1 of \cite{Beckwith-Mono-1} (see Theorem~\ref{thm:CltoPW} below), expressing the classical Hurwitz class number as a linear combination of generalized ones.

The outline of the paper is as follows. In Section \ref{sec:prelims}, we review the background on modular forms of half-integral weight, ternary quadratic forms, and generalized Hurwitz class numbers. In Section 3, we establish linear relations among local factors and prove Theorem \ref{thm:PWtoLZ}, as well as an extension to the case of non-trivial character $\chi_\ell$ (Theorem \ref{thm:nontrivial}). Section 4 presents some applications, including Theorem \ref{thm:Main}, Proposition \ref{thm:main2}, a description of the mock modular properties of the numbers $H^{(1,N)}(n)$, a generalization of Theorem 1 of \cite{Beckwith-Mono-1}, and Theorem \ref{thm:Main4}.

\section{Preliminaries}\label{sec:prelims}

\subsection{Half-integral weight modular forms}\label{sec:halfintegral}
Let $k \in \frac{1}{2}\Z$, let $N$ be a positive integer and $\chi$ be a Dirichlet character modulo $N$. Choosing the principal branch of the square-root throughout, the \emph{(Petersson) slash operator} is defined as 
\begin{align*} 
\left(f\vert_k\gamma\right)(\tau) \coloneqq \begin{cases}
(c\tau+d)^{-k} f(\gamma\tau) & \text{if } k \in \Z, \\
\left(\frac{c}{d}\right)\varepsilon_d^{2k}(c\tau+d)^{-k} f(\gamma\tau) & \text{if } k \in \frac{1}{2}+\Z,
\end{cases}
\quad \gamma = \left(\begin{matrix} a & b \\ c& d \end{matrix}\right) \in \begin{cases}
\slz & \text{if } k \in \Z, \\
\Gamma_0(4) & k \in \frac{1}{2}+\Z,
\end{cases}
\end{align*}
where $
\varepsilon_d =
\begin{cases}
1 & \text{if } d \equiv 1 \pmod*{4}, \\
i & \text{if } d \equiv -1 \pmod*{4}.
\end{cases}
$
\begin{defn}
Let $k \in \frac{1}{2}\Z$, $N \in \N$, and $f \colon \Hb \to \C$ be a smooth function. Assume that $4 \mid N$ whenever $k \in \frac{1}{2} + \Z$.
We say that $f$ is a \emph{(holomorphic) modular form} of weight $k$ on $\Gamma_0(N)$, if $f$ has the following properties:
\begin{enumerate}[label={\rm (\roman*)}]
\item we have $f\vert_k \gamma = f$ for all $\gamma \in \Gamma_0(N)$, 
\item $f$ is holomorphic on $\Hb$,
\item we have that $f$ is holomorphic at every cusp of $\Gamma_0(N)$.
\end{enumerate}
We let $M_k (\Gamma, \chi)$ be the space of such functions. If $f \in M_k(N,\chi)$ vanishes at all cusps, we say it is a cusp form. The space of cusp forms is denoted $S_k(N,\chi)$, and its orthogonal complement (with respect to the Petersson inner product) is the Eisenstein subspace $E_k (N, \chi)$. 
\end{defn}
A modular form $f \in M_k(N,\chi)$ is an element of \emph{Kohnen's plus space} $M_k^+ (N,\chi)$ if $k \in \frac{1}{2}+\Z$ and its Fourier coefficients are supported on indices $n$ satisfying $(-1)^{k-\frac{1}{2}} n \equiv 0$, $1 \pmod*{4}$. The cusp forms in $M_k^+ (N,\chi)$ form the space $S_k^+ (N,\chi)$ and the Eisenstein series in $M_k^+ (N,\chi)$ comprise the space $E_k^+ (N,\chi)$. 
We typically suppress the character in the notation when it is trivial.

\subsection{The Eisenstein plus-space}\label{sec:Eisenstein}
Let $N > 1$ be odd and square-free and define
\begin{align}\label{eq:L_Ndef}
\chi_{d} &\coloneqq \left(\frac{d}{\cdot}\right), \quad \chi_d' \coloneqq \left( \frac{\cdot }{d} \right),  \nonumber \\
L_{N}(s,\chi) &\coloneqq L(s,\chi)\prod_{\substack{p \text{ prime} \\ p \mid N}} \left(1-\chi(p)p^{-s}\right) = \prod_{\substack{p \text{ prime} \\ p \nmid N}}\frac{1}{1-\chi(p)p^{-s}} = \sum_{\substack{n \geq 1 \\ \gcd(n,N)=1}} \frac{\chi(n)}{n^s}. 
\end{align}
Pei and Wang \cite{pei-wang} (Theorem 1.1 I) showed that the dimension of $E_{\frac{3}{2}}^+(4N)$ equals $2^{\omega(N)}-1$. Let $\ell \mid N$ and define
\begin{align*}
\sigma_{\ell,N,s}(r) \coloneqq
\sum_{\substack{d \mid r \\ \gcd(d,\ell)=1 \\ \gcd\left(\frac{r}{d},\frac{N}{\ell}\right)=1}} d^s, \qquad \sigma_{N,s}(r) \coloneqq \sigma_{N,N,s}(r).
\end{align*} 
For $\ell | N$, $\varepsilon:=(-1)^{{\frac{\ell-1}{2}}}$, the Fourier coefficients in weight $\frac{3}{2}$ are given by
\begin{align} \label{eq:HNNdef}
H(\ell,N,N;n) 
&\coloneqq \begin{cases}
L_N\left(-1,\mathrm{id}\right) & \text{if } n=0, \\
L_N(0,\chi_{D_{\ell,n}'}) \sum\limits_{\substack{a \mid f_n}} \mu(a) \chi_{D_{\ell,n}}(a) \chi_{\ell}'(a) \sigma_{N,1}\left(\frac{f_n}{a}\right) & \begin{array}{@{}l} \text{if } - \varepsilon n=D_{\ell,n}f_n^2 \\-\ell n=D_{\ell,n}'(f_n')^2, \\ D_{\ell,n}, D_{\ell,n}' \text{ fundamental}; \end{array}  \\
0 & \text{else},
\end{cases}
\end{align}
as well as by
\begin{align} \label{eq:HellNdef}
H (\ell, m, N; n)
&\coloneqq \begin{cases}
0 & \text{if } n=0; \\
L_m \left(0,\chi_{D_{\ell,n}'} \right) \prod\limits_{\substack{p \text{ prime} \\ p\mid \frac{N}{m}}} \frac{1-\left(\frac{D_{\ell,n}'}{p}\right)p^{-1}}{1-p^{-2}}\Big(\frac{(\ell,D_{\ell,n})}{(\ell,D_{\ell,n},m)}\Big)\\ 
\qquad \times \sum\limits_{\substack{a \mid f_n }} \mu(a) \chi_{D_{\ell,n}}(a) \chi_{\ell}'(a) \sigma_{m,N,1}\left(\frac{f_n}{a}\right) & \begin{array}{@{}l} \text{if } -\varepsilon n=D_{\ell,n}(f_n)^2, \\ - \ell n = D_{\ell,n}' (f_n')^2,\\ D_{\ell,n}, D_{\ell,n}'\text{ fundamental};
\end{array}  \\ 
0 & \text{else},
\end{cases}
\end{align}
for $N \neq m | N$. 
Define
$$
\Hs_{\ell,m,N}(\tau) := \sum_{n=0}^{\infty} H( \ell, m, N; n) q^n.
$$

\begin{thm}[\cite{pei-wang} Theorem 1]\label{thm:PW}
If $N >1$ is an odd square-free integer, the functions $\{ H_{\ell, m, N} (\tau) : 1 < m  | N \}$ form a basis of $E_{\frac{3}{2}}^+(4N, \chi_{\ell})$.
\end{thm}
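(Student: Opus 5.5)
Theorem~\ref{thm:PW} is quoted from \cite{pei-wang}. If I had to reprove it, I would use the standard three-step route for Eisenstein plus-spaces: fix the dimension, construct explicit elements whose Fourier coefficients are the $H(\ell,m,N;n)$, and show these elements are linearly independent.

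\emph{Step 1: dimension.} By Petersson orthogonality, $E_{\frac32}(4N,\chi_\ell)$ has a basis indexed by the cusps of $\Gamma_0(4N)$ that are regular for the weight $\frac32$ multiplier twisted by $\chi_\ell$. Since $N$ is odd and square-free, the cusps are $\frac{1}{c}$ with $c\mid 4N$. The regularity condition is a local condition at each prime, so the count factors over $p\mid N$ and over the prime $2$.
\begin{itemize}
\item I would then compute the image of Kohnen's projection $\mathrm{pr}^+$ on this space, using Kohnen's eigenspace description of $M^+$ via the operator $W_4U_4$.
\item At $2$ this cuts the cusps above $2$ down to one effective choice.
\item At each $p\mid N$ one has two choices ($\frac1c$ with $p\mid c$ or $p\nmid c$).
\end{itemize}
The naive count is therefore $2^{\omega(N)}$. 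One must then discard the single combination that corresponds to Zagier's nonholomorphic class number series, which in weight $\frac32$ is not holomorphic. This gives $\dim E^+_{\frac32}(4N,\chi_\ell)=2^{\omega(N)}-1$, consistent with \cite{pei-wang}.

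\emph{Step 2: construction and coefficients.} For each $m\mid N$ I would form a nonholomorphic Eisenstein series $E_m(\tau,s)$. This is a combination of the cusp Eisenstein series at $\frac1c$ with $c$ running over the appropriate divisors, chosen so that the $p$-part for $p\mid m$ behaves like level $p$ and the $p$-part for $p\mid \frac Nm$ is ``new''. I would then apply $\mathrm{pr}^+$ and compute the Fourier coefficients via Hecke's trick at $s\to 0$.
\begin{itemize}
\item The $n$-th coefficient factors as an Euler product of local Gauss-sum (Siegel density) factors.
\item For $p\nmid 2N$ the local factors assemble into $L(0,\chi_{D'_{\ell,n}})$ times the standard divisor sum in $f_n$, i.e. the Cohen--Zagier shape $\sum_{a\mid f_n}\mu(a)\chi_{D_{\ell,n}}(a)\chi'_\ell(a)\,\sigma(\cdot)$.
\item At $p\mid m$ the Euler factor is removed, giving $L_m$ and $\sigma_{m,N,1}$.
\item At $p\mid\frac Nm$ one obtains the factor $\bigl(1-(\tfrac{D'_{\ell,n}}{p})p^{-1}\bigr)/(1-p^{-2})$ together with the factor $\frac{(\ell,D_{\ell,n})}{(\ell,D_{\ell,n},m)}$.
\end{itemize}
Matching these local factors with \eqref{eq:HNNdef} and \eqref{eq:HellNdef} identifies $\mathrm{pr}^+E_m$ with $\Hs_{\ell,m,N}$ up to a constant.

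\emph{Holomorphy for $m>1$.} The nonholomorphic part of $E_m(\tau,0)$ is a multiple of $v^{-1/2}$ plus incomplete-gamma terms supported on $-n$ a square. Its coefficient carries the factor $\prod_{p\mid m}(1-p^{\,0})$ coming from the removed local factors, and this vanishes precisely when $m>1$.

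\emph{Step 3: independence.} I would exhibit the $2^{\omega(N)}-1$ series as having independent constant terms at the cusps $\frac1c$, $c\mid N$. Equivalently, their $p$-local factors at $p\mid N$ differ: vary $n$ over values with prescribed $(\tfrac{D'_{\ell,n}}{p})$ and prescribed $p$-divisibility of $f_n$, which gives a triangular system.

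\emph{Main obstacle.} I expect the hardest step to be the local computation at $p=2$ and at $p\mid \ell$ inside the Kohnen projection. This is where $\varepsilon$, the pairing of $D_{\ell,n}$ with $D'_{\ell,n}$, and the twist $\chi'_\ell$ all enter. I would first treat $\ell=1$, following Cohen's and Kohnen's computations, and then insert the twist prime by prime.
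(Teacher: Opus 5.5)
The paper gives no proof of this statement. It is quoted from Pei--Wang and used as a black box: for the dimension count in Section~\ref{sec:Eisenstein}, and in the proofs of Proposition~\ref{thm:main2} and Corollary~\ref{thm:harmonic-Maass-form}. So there is no internal argument to compare with. Your plan is the standard Cohen--Kohnen route that the cited source follows: regularized Eisenstein series at the cusps, Kohnen's projection, an Euler-product evaluation of the coefficients, then dimension plus independence. As a roadmap it is sound. It is not yet a proof, because the substance of the theorem is deferred in Step~2: that $\mathrm{pr}^+E_m$ has exactly the coefficients \eqref{eq:HNNdef} and \eqref{eq:HellNdef}, including the factors at $2$ and at $p\mid\ell$.

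\textbf{Step 1 is wrong as written.} In weight $\frac32$ the holomorphic space $E_{3/2}(4N,\chi_\ell)$ does not have a basis indexed by the regular cusps.
\begin{itemize}
\item Hecke's trick produces one harmonic series per regular cusp, spanning a space $\mathcal E$, and $E_{3/2}(4N,\chi_\ell)$ is the kernel of $\xi_{3/2}$ on $\mathcal E$.
\item The image of $\xi_{3/2}$ lies in $M_{1/2}(4N,\chi_\ell)$. Since $N$ is odd and square-free, Serre--Stark shows this is the line spanned by $\theta(\ell\tau)=\sum_{t\in\Z}q^{\ell t^2}$.
\item So one dimension is lost before projecting. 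You must then check that the lost direction lies in the plus part; for $\ell=1$ it is the completed generating function of $H_{1,N}(n)$ quoted in the proof of Corollary~\ref{thm:harmonic-Maass-form}. Alternatively, quote $\dim E^+_{3/2}(4N,\chi_\ell)=2^{\omega(N)}-1$ from Pei--Wang.
\end{itemize}
Some such upper bound is indispensable for ``basis'', since Steps~2 and~3 only give membership and independence.

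\textbf{The holomorphy argument.} The same shadow observation tightens it. For $\ell>1$ the incomplete-gamma terms sit at $n=-\ell t^2$, not at ``$-n$ a square''. These are exactly the indices where $D'_{\ell,n}=1$, so $L_m(0,\chi_{D'_{\ell,n}})=\zeta(0)\prod_{p\mid m}(1-1)=0$ for $m>1$. Because the shadow is a multiple of $\theta(\ell\tau)$, this also kills the $v^{-1/2}$ term.

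\textbf{Step 3 is fine.} It can be made explicit with the paper's own factorization: Proposition~\ref{thm:Localexp} and its $\ell$-version in the proof of Proposition~\ref{prop:Ape3}.
\begin{itemize}
\item All $H(\ell,m,N;n)$ share the factor $L(0,\chi_{D'_{\ell,n}})\prod_{p\nmid N}D_p(\ell,n)$, which is positive for $n>0$ in the support.
\item At each $p\mid N$ they contribute either $C_p(\ell,n)$, or $A_p(\ell,n)$ times the $p$-part of $\frac{(\ell,D_{\ell,n})}{(\ell,D_{\ell,n},m)}$.
\item These two local functions are linearly independent on the $p$-local data.
\item Every tuple of local data at the primes dividing $N$ occurs for suitable $n$, by the Chinese remainder theorem.
\end{itemize}
Hence all $2^{\omega(N)}$ products are independent, and in particular the $2^{\omega(N)}-1$ with $m>1$.
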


\subsection{Ternary Quadratic forms}
Let $Q$ be a positive definite integral ternary quadratic form given by:
\[
Q(x,y,z) = ax^2 + by^2 + cz^2 + ryz + sxz + txy.
\]
\medskip
Recall that the matrix associated to $Q$ is
$$
M_Q = 
\begin{pmatrix}
2a & t & s \\
t & 2b & r \\
s & r & 2c
\end{pmatrix}.
$$
\
The \emph{discriminant} of $Q$ is defined as $d = d_Q := \frac{\det(M_Q)}{2}=4abc+rst-ar^2-bs^2-ct^2$. The \emph{level} of $Q$ is the smallest positive integer $N$ such that 
  $NM_f^{-1}$ is even, that is, has integral entries and even
integers on the main diagonal. The \emph{theta series} of the form $Q$ is defined by: 
  $$
  \theta_Q(\tau):=\sum_{(a,b,c) \in \Z^3} q^{Q(a,b,c)}=\sum_{n=0}^{\infty}R_Q(n)q^n.
  $$
where $$R_Q(n) := \Bigg| \left\{ X \in \mathbb{Z}^3 \mid Q(X) = \frac{1}{2} X^t M_Q X = n \right\}\Bigg|$$ is the number of ways to represent an integer $n$ by ternary quadratic form $Q$.

Let $Q_1, Q_2$ be two ternary quadratic forms. We say that they are equivalent over $\mathbb{Z}$ (or $\mathbb{R}$ or $\mathbb{Z}_p$)
if there exists an invertible real matrix $M \in \mathrm{GL}_3(\mathbb{Z})$ (or $\mathrm{GL}_3(\mathbb{R})$ or $\mathrm{GL}_3(\mathbb{Z}_p)$ respectively) such that
$$
M_{Q_1} = M M_{Q_2} M^t.
$$\
We simply say that $Q_1, Q_2$ are \emph{equivalent} and in the same \emph{class} if they are equivalent over $\mathbb{Z}$, and we say that $Q_1$ and $Q_2$ are \emph{semi-equivalent} and in the same \emph{genus} if they are equivalent over $\mathbb{R}$ and $\mathbb{Z}_p$; that is, they become isometric after extension to each local completion of $\mathbb{Q}$.\

Let $G_{4 N,\,d,\,N^{odd}}$ be the set equivalence classes of positive definite ternary quadratic forms $Q$ of level $N$, discriminant $d$ which are anisotropic  at primes $p$ (i.e. the forms have no non-trivial zero in $\mathbb{Q}_p$) only for $p | N^{odd}$. Theorem 1.1 of \cite{LuoZhou} says that $G_{4N, \, d, N^{odd}}$ is a genus, and moreover, that every genus of level $4 N$ forms is of the form $G_{4 N,\,d,\,N^{odd}}$, where $d, N^{odd}$ are classified in terms of $N$.

\subsection{Ternary theta functions}\label{sec:theta-functions}
Let $D\ge0$ and $N_1, N_2$ be two odd coprime square-free positive integers.
Let $f_{N_1,N_2}$ be the largest integer containing only prime factors of $N_1N_2$ such that $-D/f_{N_1,N_2}^2$ is still a negative discriminant.

Li, Skoruppa, and Zhou \cite{LiSkoruppaZhou} define the \emph{modified class number} as
\begin{align*}
&H^{(N_1,N_2)}(D) 
:= \\ 
& 
\begin{cases} H\left(\frac{-D}{f_{N_1,N_2}^2}\right) 
\displaystyle\prod_{p \mid N_1} \left(1 - \left(\frac{-D/f_{N_1,N_2}^2}{p}\right)\right)   
\prod_{p \mid N_2} 
\frac{2p f_p - p - 1 - \left(\tfrac{-D/f_{N_1,N_2}^2}{p}\right)(2f_p - p - 1)}{p-1} & \text{if } D \neq 0 \\
- \frac{1}{12} \displaystyle\prod_{p|N_1 } (1-p) \prod_{p|N_2 } (p+1) & \text{if } D=0
\end{cases}
\end{align*}
where $f_p$ is the $p-$part of $f_{N_1,N_2}$.

Luo and Zhou \cite{LuoZhou} prove that a certain linear combination of representation of quadratic forms varying over the equivalence classes in genera is given by a modified Hurwitz class number. The case of their result that we will use is the following:
\begin{thm}[Theorem 1.2 \cite{LuoZhou}]\label{thm:L&Z}
Let $N$ be a odd square-free integer and $N^{\mathrm{odd}}$ be a divisor of $N$ with an odd number of prime factors. Then, we have:
\begin{align*}
\sum_{Q \in G_{4N,\,16N^2,\,N^{\mathrm{odd}}}}
 \frac{
\theta_Q(\tau)}{|\mathrm{Aut}(Q)|}
= 2^{-\omega(N)-1} \left(\sum_{n=0}^{\infty}H^{(N^{\mathrm{odd}},\,N/N^{\mathrm{odd}})}(n)q^n\right),
\end{align*}
where $\mathrm{Aut}(Q) = \{ M \in \mathrm{GL}_3(\mathbb{Z}): M_Q = M M_Q M^t \}$.
\end{thm}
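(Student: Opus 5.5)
Since Theorem~\ref{thm:L&Z} is imported from \cite{LuoZhou}, I only sketch how I would reprove it, following the quaternionic route of Gross \cite{Gross} as extended in \cite{LiSkoruppaZhou}. Write $N_1 = N^{\mathrm{odd}}$ and $N_2 = N/N^{\mathrm{odd}}$, and let $B$ be the definite quaternion algebra over $\mathbb{Q}$ ramified exactly at $\infty$ and the primes dividing $N_1$; this is possible because $\omega(N_1)$ is odd. Fix an Eichler order $\mathcal{O}\subset B$ of level $N_2$, so its reduced discriminant is $N$.

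The first step is to attach to each right-ideal class (equivalently, each type of Eichler order $\mathcal{O}_i$ of level $N_2$) the Gross lattice $S_i = \{x \in \mathbb{Z} + 2\mathcal{O}_i : \mathrm{tr}(x) = 0\}$, equipped with the reduced norm. One checks locally that this is a positive definite ternary form of level $4N$ and discriminant $16N^2$: at $p \mid N_1$ the lattice comes from the maximal order of a division algebra, and at $p \mid N_2$ from $\left(\begin{smallmatrix} \mathbb{Z}_p & \mathbb{Z}_p \\ p\mathbb{Z}_p & \mathbb{Z}_p\end{smallmatrix}\right)$. It is anisotropic exactly at $p \mid N_1$, since $B_p$ is split elsewhere. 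I would then show that $\mathcal{O}_i \mapsto S_i$ induces a bijection between order types and the classes in $G_{4N,16N^2,N_1}$ (using \cite{LuoZhou} Theorem~1.1 that this set is a single genus), and that $|\mathrm{Aut}(S_i)| = 2|\mathcal{O}_i^\times/\{\pm1\}|\cdot(\text{const})$. The left-hand side then becomes a weighted sum $\sum_i \theta_{S_i}/|\mathcal{O}_i^\times|$ up to an explicit power of $2$.

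Next, I would compute the coefficients of this weighted sum by the Eichler trace formula. An element $x \in S_i$ with $\mathrm{nr}(x) = n$ generates an embedding of the quadratic order of discriminant $-n$, or of an order containing it, into $\mathcal{O}_i$. Summing over $i$ with weights $1/|\mathcal{O}_i^\times|$ converts representation counts into $\sum_{f} h(-n/f^2)/w(-n/f^2)\prod_p m_p(\cdot)$, where the $m_p$ are local optimal embedding numbers. Away from $N$ these local factors are $1$, and summing over conductors $f$ prime to $N$ reproduces $H(n/f_{N_1,N_2}^2)$. At $p \mid N_1$ the embedding number is $1 - \left(\frac{d}{p}\right)$. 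At $p \mid N_2$ one must sum the Eichler-order embedding numbers over all conductors $p^j$ with $j \le v_p(f)$, which should telescope to $\frac{2pf_p - p - 1 - \left(\frac{d}{p}\right)(2f_p - p - 1)}{p-1}$. The constant term is then the mass $\sum_i 1/|\mathcal{O}_i^\times| = \frac{1}{24}\prod_{p\mid N_1}(p-1)\prod_{p\mid N_2}(p+1)$, which matches $-\frac{1}{12}\prod(1-p)\prod(p+1)$ up to the same normalization.

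The main obstacle is the local computation at $p \mid N_2$: the embedding numbers depend on whether $p$ divides the conductor, and one must account for the fact that $\mathbb{Z}+2\mathcal{O}$ forces the discriminant condition $n \equiv 0,3 \pmod 4$. A second delicate point is keeping the constant $2^{-\omega(N)-1}$ straight, since it combines the $\pm$ symmetry, the index of Gross's lattice, and the factor $2^{\omega(N)}$ relating $|\mathrm{Aut}(S_i)|$ to $|\mathcal{O}_i^\times|$ via Atkin–Lehner-type automorphisms. If the direct computation at $p \mid N_2$ becomes unwieldy, I would instead use Siegel–Weil, which identifies the genus average with an Eisenstein series. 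I would then compare local densities prime by prime, where the $p$-adic densities of the explicit local lattices give the same Euler factors.
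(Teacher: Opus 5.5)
The paper gives no proof of this statement. It is imported from \cite{LuoZhou}, where it comes from the Li--Skoruppa--Zhou identity for weighted sums of Jacobi theta series of Eichler orders, combined with the dictionary between Eichler orders and ternary forms. Your route (Gross lattice, Eichler's optimal embedding theorem, mass formula) is a sound alternative, but one step fails as written.

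\textbf{The gap: ideal classes, types and $\mathrm{Aut}(S_i)$.} Right ideal classes and types are not equivalent, and $|\mathrm{Aut}(S_i)|/|\mathcal{O}_i^\times|$ is not constant. Since $-1$ has determinant $-1$, every rotation of $B^0$ is conjugation by $B^\times$, and $\mathbb{Z}\oplus S_i=\mathbb{Z}+2\mathcal{O}_i$ determines $\mathcal{O}_i$, one gets $\mathrm{Aut}(S_i)=\{\pm1\}\times N(\mathcal{O}_i)/\mathbb{Q}^\times$. Hence
\[
|\mathrm{Aut}(S_i)|=e_i\,|\mathcal{O}_i^\times|,\qquad e_i=[N(\mathcal{O}_i):\mathbb{Q}^\times\mathcal{O}_i^\times],
\]
where $e_i$ divides $2^{\omega(N)}$ but depends on $i$. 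For example, for $N=N_1=37$ there are $3$ ideal classes but only $2$ types, with $e_i=2$ and $e_i=1$. So there is no per-order factor $2^{\omega(N)}$ coming from Atkin--Lehner elements.

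The fix is to let $i$ run over right ideal classes, as Eichler's formula $\sum_{[I]}m(\mathcal{O}_d,\mathcal{O}_l(I);\mathcal{O}_l(I)^\times)=h(\mathcal{O}_d)\prod_p m_p$ requires. Then use that a type occurs among the right ideal classes exactly $2^{\omega(N)}/e_i$ times, which is its number of two-sided ideal classes. This gives
\begin{align*}
\sum_{[I]}\frac{\theta_{S_{\mathcal{O}_l(I)}}}{|\mathcal{O}_l(I)^\times|}
=\sum_{\text{types } i}\frac{2^{\omega(N)}}{e_i\,|\mathcal{O}_i^\times|}\,\theta_{S_i}
=2^{\omega(N)}\sum_{Q\in G_{4N,16N^2,N_1}}\frac{\theta_Q}{|\mathrm{Aut}(Q)|},
\end{align*}
and this is where $2^{\omega(N)}$ really comes from. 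The remaining $\tfrac12$ is $h/w$ against the $2h/w$ normalization of $H$; no index of Gross's lattice enters.

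\textbf{The rest of the outline.} With this repaired, the argument closes, and the local step you were worried about is fine. Write $\chi=\left(\frac{D_n}{p}\right)$.
\begin{enumerate}
\item At $p\mid N_2$, the level-$p$ local embedding numbers are $1+\chi$ for the $p$-maximal order and $2$ whenever $p$ divides the conductor. Weighting conductor $p^j$ by the class-number ratio $p^j(1-\chi p^{-1})$ gives $1+\chi+2(p-\chi)\frac{f_p-1}{p-1}=B_p(n)$.
\item At $p\mid N_1$ only the $p$-maximal overorder embeds, which gives $C_p(n)$.
\item Away from $N$ one gets $D_p(n)$.
\end{enumerate}
This is exactly the factorization \eqref{eq:LSZ}. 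Since $\omega(N_1)$ is odd, $H^{(N_1,N_2)}(0)$ is twice the Eichler mass $\frac1{24}\prod_{p\mid N_1}(p-1)\prod_{p\mid N_2}(p+1)$, consistent with the same $\tfrac12$.

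\textbf{Comparison with the cited route.} Yours stays in weight $3/2$ and explains $B_p$, $C_p$, $D_p$ conceptually as local embedding data. The Jacobi-form route produces the full index-one Jacobi Eisenstein series, so modularity and the Eisenstein property are automatic, and the ternary identity is read off by theta decomposition. Both routes rest on the same nontrivial input: the bijection between types of level-$N_2$ Eichler orders in $B$ and classes of $G_{4N,16N^2,N_1}$. You should cite this rather than leave it as ``I would show''.
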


\textbf{Remark:} [\cite{LuoZhou}, Theorem 1.2] also give us the equation
$$
\sum_{Q \in G_{4N,\,4N^2,\,2N^{\mathrm{even}}}}
\frac{R_Q(n)}{|\mathrm{Aut}(Q)|}
=
2^{-\omega(N)-2}
H^{(2N^{\mathrm{even}},\,N/N^{\mathrm{even}})}(4n).
$$
where $N^{\mathrm{even}}$ is a divisor of $N$ with an even number of prime factors. In particular, when $N=1$, we have
\begin{equation}\label{eq:r3}
    r_3(n)=12 H^{(2,1)}(4n).
\end{equation}

As one of the applications of their main results, when $N$ is odd and square-free, Theorem 7.2 of \cite{LuoZhou} says that $\{ \theta_{d, 2N/d} (\ell z) : d |2N, d > 1 \}$ is a basis of $E_{\frac{3}{2}}(4N, \chi_{\ell})$, where 
$$
\theta_{d, 2N/d} ( z) := \sum_{n=0}^{\infty} H^{(d, 2N/d)}(4n) q^n. 
$$

\section{Relations between higher level Hurwitz class numbers}
We will express $H(\ell, m, N; n)$ as a linear combination of $H^{(d, N/d)}(n)$ and vice versa when $\ell=1$ (the trivial character case $\chi_\ell=id$).
For simplicity throughout set
$$
H_{m,N}(n) := H(1,m,N;n).
$$
and $D_n:=D_{1,n}$. Note that when $\ell=1$, we have $\varepsilon=(-1)^{\frac{\ell-1}{2}}=1$ and the equation $-\varepsilon n=D_{\ell,n}f_n^2$ becomes $-n = D_n f_n^2$.
\subsection{Local factors} Let $-n = D_n f_n^2$ wehre $D_n$ is a fundamental discriminant and let $p$ be a prime number. Local factors can be defined as follows:
$$
\begin{aligned}
A_p(n) &= \frac{f_p \bigl(1 - \left(\tfrac{D_n}{p}\right) p^{-1}\bigr)}{1 - p^{-2}}, \\
B_p(n) &= \frac{2f_p - p - 1 - \left( \frac{D_n}{p} \right)(2f_p - p - 1)}{p-1}, \\
C_p(n) &= 1 - \left(\frac{D_n}{p}\right), \\
D_p(n) &= \sigma(f_p) - \sigma\!\left(\tfrac{f_p}{p}\right)\!\left(\tfrac{D_n}{p}\right),
\end{aligned}
$$
where $\sigma(1/p)=0$.

The following lemma will be useful later on.
\begin{lemma}\label{lem:linearlocalfactor}
Let $p$ be a prime number, $n \ge 1$. Then
{\normalfont
\begin{enumerate}
    \item $B_p(n) = (p+1) \left( \frac{2}{p} A_p(n) + \frac{1}{1-p} C_p(n) \right)$,
    \item $A_p(n)=\dfrac{p}{2(p+1)}B_p(n)+\dfrac{p}{2(p-1)}C_p(n)$,
    \item $D_p(n) = \frac{p+1}{p} A_p(n) + \frac{1}{1-p} C_p(n)$,
    \item $D_p(n)=\frac{1}{2}B_p(n)+\frac{1}{2}C_p(n).$
\end{enumerate}
}
\end{lemma}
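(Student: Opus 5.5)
The plan is to reduce everything to one variable. Fix $p$ and $n$, write $\varepsilon:=\left(\frac{D_n}{p}\right)\in\{-1,0,1\}$, and write $f:=f_p=p^k$ with $k\ge 0$. Then each of $A_p,B_p,C_p,D_p$ becomes an explicit rational function of $f$, $p$ and $\varepsilon$. Each identity (1)--(4) then amounts to clearing denominators (all of the form $p-1$, $p+1$, $p^2-1$) and comparing the coefficients of $f$ and of the constant term, each linear in $\varepsilon$.

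First I put each factor in closed form. Since $1-p^{-2}=(p^2-1)/p^2$, we get $A_p(n)=\frac{fp(p-\varepsilon)}{p^2-1}$. Trivially $C_p(n)=1-\varepsilon$. For $D_p$, the geometric sums give $\sigma(p^k)=\frac{pf-1}{p-1}$ and $\sigma(p^{k-1})=\frac{f-1}{p-1}$. The second formula is also correct for $k=0$, because of the convention $\sigma(1/p)=0$. Hence $D_p(n)=\frac{pf-1-\varepsilon(f-1)}{p-1}$.

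For $B_p$ I will use the local factor appearing in the definition of $H^{(N_1,N_2)}$ in Section~\ref{sec:theta-functions}, namely
$$B_p(n)=\frac{2pf-p-1-\varepsilon(2f-p-1)}{p-1}.$$
The displayed definition of $B_p$ drops the factor $p$ in the first term, and I treat that as a typo. This correction is forced: with the literal formula, $f=1$ and $\varepsilon=1$ give $B_p=0$, while the right-hand side of (1) equals $2$. With the corrected $B_p$, (1) checks directly: $(p+1)\bigl(\tfrac{2}{p}A_p+\tfrac{C_p}{1-p}\bigr)=\frac{2f(p-\varepsilon)-(1-\varepsilon)(p+1)}{p-1}$, which expands to the numerator of $B_p$. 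Similarly, (3) checks directly: $\frac{p+1}{p}A_p=\frac{f(p-\varepsilon)}{p-1}$, and subtracting $\frac{1-\varepsilon}{p-1}$ gives exactly $\frac{pf-1-\varepsilon(f-1)}{p-1}=D_p$.

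The remaining two identities follow formally from these. For (2), I solve (1) for $A_p$: $\frac{2}{p}A_p=\frac{B_p}{p+1}+\frac{C_p}{p-1}$, which is (2). For (4), I substitute this expression for $A_p$ into (3): $\frac{p+1}{p}A_p=\frac{B_p}{2}+\frac{(p+1)C_p}{2(p-1)}$. Adding $\frac{C_p}{1-p}$ leaves the coefficient $\frac{p+1-2}{2(p-1)}=\frac12$ on $C_p$, which is (4).

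There is no real obstacle beyond bookkeeping. The only delicate point is the normalization of $B_p$ together with the edge case $f_p=1$ (i.e.\ $k=0$), which the convention $\sigma(1/p)=0$ handles uniformly.
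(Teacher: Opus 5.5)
Your proof is correct and follows essentially the same route as the paper. Like the paper, you verify (1) and (3) by direct expansion, using the geometric sum for $\sigma(p^k)$, then obtain (2) by solving (1) for $A_p(n)$ and (4) by substituting into (3). Your reading of $B_p(n)$ with the leading term $2pf_p$ is right: the paper's own first expansion step already uses that form, which matches the local factor in the definition of $H^{(N_1,N_2)}$. Your uniform handling of $f_p=1$ via $\sigma(1/p)=0$ also covers the case $T=0$, which the paper's ``positive integer $T$'' leaves out.
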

\begin{proof}
First, we expand $B_p(n)$ and compute
\begin{align*}
B_p(n)  &= \frac{2 f_p \left(p - \left(\frac{D_n}{p}\right)\right)}{p-1} - \frac{\left(1 - \left(\frac{D_n}{p}\right)\right) (p+1)}{p-1} \\
    &= \frac{2(p+1)}{p} \frac{f_p p \left(p - \left(\frac{D_n}{p}\right)\right)}{(p-1)(p+1)} - C_p(n) \frac{p+1}{p-1} \\
    &= (p+1) \left( \frac{2}{p} A_p(n) + \frac{1}{1-p} C_p(n) \right)
\end{align*}
The second part is a direct consequence of the first part. To prove the third part, let $f_p = p^T$ for some positive integer $T$. Then writing the divisor sum evaluated at $f_p$ as a geometric sum, we obtain
\begin{align*}
    D_p(n) &= p^T + \left(1 - \left(\frac{D_n}{p}\right)\right) \frac{p^T - 1}{p-1} \\
        &= \frac{p^T\left(p-1 + 1 - \left(\frac{D_n}{p}\right)\right)}{p-1} - \frac{1 - \left(\frac{D_n}{p}\right)}{p-1}\\
        &= \frac{p+1}{p} A_p(n) + \frac{1}{1-p} C_p(n).
\end{align*}
The last part follows from combining the first and third parts of the lemma.
\end{proof}

\begin{lemma}\label{lem:SintoP}
    Let $-n = D_n(f_n)^2$ where $D_n$ is a fundamental discriminant and $N$ be an odd square-free integer, with $m$ be a positive divisor of $N$. Then
    \begin{align*}
    \sum_{\substack{d | f_n \\ (d, N) = 1}} \mu(d) \left( \frac{D_n}{d} \right) \sigma_{m, N, 1} \left( \frac{f_n}{d} \right) = \prod_{p | \frac{N}{m}} f_p \cdot \prod_{p \nmid N} D_p(n)
\end{align*}
where $f_p$ is the $p$-part of $f_n$.
\end{lemma}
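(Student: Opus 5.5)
The sum on the left is multiplicative in the prime factorization of $f_n$, so the plan is to factor it into local pieces and identify each piece with $f_p$ or $D_p(n)$.

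\textbf{Step 1: definitions and multiplicativity.} Recall that
\[
\sigma_{m,N,1}(r)=\sum_{\substack{e\mid r,\ \gcd(e,m)=1\\ \gcd(r/e,\,N/m)=1}} e .
\]
The summand condition is local at each prime. For $p\nmid N$ there is no constraint on $e$. For $p\mid m$ we need $p\nmid e$. For $p\mid \frac{N}{m}$ we need $p\nmid r/e$, so the full $p$-part of $r$ must lie in $e$. Hence $r\mapsto \sigma_{m,N,1}(r)$ is multiplicative, with local values
\[
\sigma_{m,N,1}(p^k)=
\begin{cases}
\sigma(p^k) & p\nmid N,\\
1 & p\mid m,\\
p^k & p\mid \frac{N}{m}.
\end{cases}
\]
The function $d\mapsto \mu(d)\left(\frac{D_n}{d}\right)\mathbf 1_{(d,N)=1}$ is also multiplicative. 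The left side is therefore a Dirichlet convolution evaluated at $f_n$, and it factors as $\prod_{p\mid f_n}S_p$, where $S_p$ is the same sum with $f_n$ replaced by its $p$-part $f_p=p^T$. For $p\nmid f_n$ we have $f_p=1$, and the corresponding factors on the right are $1$: indeed $D_p(n)=\sigma(1)-\sigma(1/p)(\cdot)=1$ by the convention $\sigma(1/p)=0$. So the product over all primes can be compared directly.

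\textbf{Step 2: local computation.} Fix $p$ with $f_p=p^T$, $T\ge 1$.
\begin{itemize}
\item If $p\mid N$, the condition $(d,N)=1$ forces $d=1$, so $S_p=\sigma_{m,N,1}(p^T)$. This equals $p^T=f_p$ when $p\mid\frac{N}{m}$ and equals $1$ when $p\mid m$. These are exactly the factors $\prod_{p\mid N/m}f_p$ on the right, with trivial contribution from primes dividing $m$.
\item If $p\nmid N$, the admissible values are $d\in\{1,p\}$. This gives $S_p=\sigma(p^T)-\left(\frac{D_n}{p}\right)\sigma(p^{T-1})=D_p(n)$ by definition, using $\sigma(f_p/p)=\sigma(p^{T-1})$.
\end{itemize}

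\textbf{Step 3: assemble.} Multiplying the local factors yields the stated identity. The only real obstacle is justifying the factorization cleanly. This means checking that the coprimality conditions in $\sigma_{m,N,1}$ and the condition $(d,N)=1$ are genuinely prime-by-prime, so that the convolution of multiplicative functions is multiplicative. It also requires handling the edge conventions, namely $f_p=1$ and $\sigma(1/p)=0$, so that primes not dividing $f_n$ contribute $1$ on both sides.
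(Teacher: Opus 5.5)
Your proposal is correct and follows essentially the same route as the paper: both use the multiplicativity of $\sigma_{m,N,1}$, whose local values are $1$ at $p\mid m$ and $p^k$ at $p\mid \frac{N}{m}$, together with the local identity $\sigma(f_p)-\left(\frac{D_n}{p}\right)\sigma(f_p/p)=D_p(n)$ at $p\nmid N$. The only difference is cosmetic: the paper first pulls the $N$-part of $f_n/d$ out of $\sigma_{m,N,1}$ and then factors the remaining sum over $\widetilde{f_n}=\prod_{p\nmid N}f_p$, whereas you factor the whole Dirichlet convolution prime by prime in one step.
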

\begin{proof}
By the multiplicativity of $\sigma_{m,N,1}$ and M\"{o}bius inversion, we have
\begin{equation}\label{eq:Lemma32-1}
  \sum_{\substack{d | f_n \\ (d, N) = 1}} \mu(d) \left( \frac{D_n}{d} \right) \sigma_{m, N, 1} \left( \frac{\widetilde{f_n}}{d} \right) = \prod_{\substack{p|f_n \\ p \nmid N}} \left( \sigma_{m,N,1}(f_p) - \left( \frac{D_n}{p} \right) \sigma_{m,N,1}(f_p/p) \right) ,
\end{equation}
    where $\widetilde{f_n} = \prod_{p \nmid N} f_p$. 
Note that if $r$ is a $p$-power where $p$ is a prime such that $p|m$, then one has $\sigma_{m,N,1}(r) = 1$. If on the other hand $r | \frac{N}{m}$, we have $\sigma_{m,N,1}(r) = r$. We deduce that
for any divisor $d$ of $f_n$ coprime to $N$, we have 
$$
\sigma_{m,N,1}(f_n/d) = \prod_{p| \frac{N}{m}} f_p \cdot \sigma_{m,N,1}(\widetilde{f_n}/d).
$$
    Plugging this into \eqref{eq:Lemma32-1} completes the proof.
\end{proof}

\begin{prop}\label{thm:Localexp}
With the above notation, for all $-n=D_nf_n^2< 0$ where $D_n$ is a fundamental discriminant), we have 
      \begin{align} \label{eq:PW}
      H_{m,N}(n) &=  L(0,\chi_{D_n})\displaystyle\prod_{p \nmid N} D_p(n) \displaystyle\prod_{p \mid m} C_p(n) \displaystyle\prod_{p \mid \frac{N}{m}} A_p(n)  
      \end{align}
      \begin{align}\label{eq:LSZ}
          H^{(m, \frac{N}{m})}(n)&= L(0, \chi_{D_n}) \displaystyle\prod_{p \nmid N} D_p(n) \displaystyle\prod_{p \mid m} C_p(n) \displaystyle\prod_{p \mid \frac{N}{m}} B_p(n).
       \end{align}
    In particular, 
    \begin{align}\label{eq:cl}
        H(n) = H_{1,1}(n) = L(0, \chi_{D_n}) \displaystyle\prod_{p \mid f_n} D_p(n) .
    \end{align} 
\end{prop}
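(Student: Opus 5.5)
The plan is to unwind the definitions \eqref{eq:HNNdef}, \eqref{eq:HellNdef} (with $\ell=1$) and the definition of $H^{(N_1,N_2)}$ prime by prime, and to match each Euler-type factor with one of the local factors $A_p,B_p,C_p,D_p$. Throughout write $-n=D_nf_n^2$ and $D=D_n$. Since $\ell=1$, we have $D'_{1,n}=D_n$, the factor $\chi_\ell'$ is trivial, and $\frac{(\ell,D)}{(\ell,D,m)}=1$.

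\textbf{Formula \eqref{eq:PW}.}
\begin{enumerate}
\item First rewrite $L_m(0,\chi_D)=L(0,\chi_D)\prod_{p\mid m}\bigl(1-\chi_D(p)\bigr)=L(0,\chi_D)\prod_{p\mid m}C_p(n)$. For $m=N$ this uses $L_N$, which gives the same result.
\item Next treat the divisor sum $\sum_{a\mid f_n}\mu(a)\chi_D(a)\sigma_{m,N,1}(f_n/a)$. I expect this to be the main obstacle. The sum is not restricted to $(a,N)=1$, so before applying Lemma \ref{lem:SintoP} I must show that terms with $p\mid a$ for $p\mid N$ contribute appropriately.
\begin{enumerate}
\item The sum is multiplicative in $f_n$, so it factors as $\prod_{p}\bigl(\sigma_{m,N,1}(f_p)-\chi_D(p)\sigma_{m,N,1}(f_p/p)\bigr)$.
\item For $p\mid m$ one has $\sigma(f_p)=1$. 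If $p\mid f_n$, the local factor is $1-\chi_D(p)$. If $p\nmid f_n$, it is $1$.
\item I need to reconcile this with $C_p(n)$ coming from $L_m$. I expect the intended reading is that the $p\mid m$ local factors equal $1$; either the $\mu(a)$ sum is effectively over $(a,N)=1$, as in Lemma \ref{lem:SintoP}, or $\chi_D(p)=0$ whenever $p\mid f_n$ contributes. I will check the Pei--Wang normalization on this point and adopt the version that makes Lemma \ref{lem:SintoP} apply verbatim.
\item Lemma \ref{lem:SintoP} then gives $\prod_{p\mid N/m} f_p\prod_{p\nmid N}D_p(n)$.
\end{enumerate}
\item Combine this with the factor $\prod_{p\mid N/m}\frac{1-\chi_D(p)p^{-1}}{1-p^{-2}}$. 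Prime by prime this produces $A_p(n)$ for $p\mid N/m$.
\item Finally note that $D_p(n)=1$ for $p\nmid f_n$, so $\prod_{p\nmid N}D_p$ is a finite product. For $m=N$ there are no $A_p$ factors and the formula is immediate.
\end{enumerate}

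\textbf{Formula \eqref{eq:LSZ}.} I would use the classical formula $H(|D|f^2)=L(0,\chi_D)\prod_{p\mid f}D_p$; this is \eqref{eq:cl}, which follows from $H(n)=\frac{h(D)}{w(D)/2}\sum_{d\mid f}\mu(d)\chi_D(d)\sigma_1(f/d)$ together with $L(0,\chi_D)=2h(D)/w(D)$. Apply it to $-D_n f^2$ with $f=f_n/f_{m,N/m}$, that is, with the $N$-part of $f_n$ removed. This yields $L(0,\chi_D)\prod_{p\nmid N}D_p(n)$. The remaining products in the definition of $H^{(N_1,N_2)}$ are exactly $\prod_{p\mid m}C_p(n)$ and $\prod_{p\mid N/m}B_p(n)$. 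Here one must check that $\bigl(\frac{-D/f_{N_1,N_2}^2}{p}\bigr)=\bigl(\frac{D_n}{p}\bigr)$ for $p\mid N$, which holds because the remaining square factor is prime to $p$. The last claim, \eqref{eq:cl}, is then the case $m=N=1$ of either formula.
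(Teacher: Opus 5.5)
Your proposal is correct and follows the paper's proof. The paper likewise applies Lemma~\ref{lem:SintoP} to the divisor sum read as restricted to $(d,N)=1$ (its first displayed line inserts this condition without comment) and absorbs the Euler factors into $C_p(n)$ and $A_p(n)$. It then obtains \eqref{eq:LSZ} by applying \eqref{eq:cl} to $n/f_{m,N/m}^2$ and matching the remaining products with $C_p(n)$ and $B_p(n)$; the paper takes \eqref{eq:cl} as the case $m=N=1$, whereas you justify it by the classical formula for $H$.

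You are right that the restricted reading is forced: with the unrestricted sum, the factor at $p\mid m$, $p\mid f_n$ would be $(1-\chi_{D_n}(p))^2\neq C_p(n)$ when $\chi_{D_n}(p)=-1$. So commit to that reading, and drop the alternative that $\chi_{D_n}(p)=0$ whenever $p\mid f_n$, which fails in general (e.g.\ $D_n=-4$, $p=3\mid f_n$).
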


\begin{proof}
    We start by proving the first equation. Using Lemma \ref{lem:SintoP}, we compute
    \begin{align*}
    &H_{m,N}(n)\\ 
&= L_m(0, \chi_{D_n}) \prod_{p \mid \frac{N}{m}} \frac{1 - p^{-1} \left(\frac{D_n}{p}\right)}{1 - p^{-2}} \sum_{\substack{d | f_n \\ (d, N) = 1}} \mu(d) \left( \frac{D_n}{d} \right) \sigma_{m, N, 1} \left( \frac{f_n}{d} \right) \\
&= L(0, \chi_{D_n}) 
      \displaystyle\prod_{p \mid m} 
      \left(1 - \left(\frac{D_n}{p}\right)\right) \left(
        \displaystyle\prod_{p \mid \frac{N}{m}} 
        \frac{f_p \left(1 - p^{-1} \left(\frac{D_n}{p}\right)\right)}{1 - p^{-2}}
      \right)
      \
      \frac{1}{\displaystyle\prod_{p \mid \frac{N}{m}} f_p}   \left(\displaystyle\prod_{p \mid \frac{N}{m}} f_p\right)\left(\displaystyle\prod_{p \nmid N} D_p(n)\right) \\
      &= L(0, \chi_{D_n}) 
      \prod_{p \mid m} C_p(n) \cdot 
      \displaystyle\prod_{p \mid \frac{N}{m}} A_p(n)  \cdot  
    \displaystyle\prod_{p \nmid N} D_p(n)
    \end{align*}
This proves \ref{eq:PW}. Equation \ref{eq:cl} is a special case when $n=M=1$.

For the second equation, as in subsection \ref{sec:theta-functions}, let $f_{m, \frac{N}{m}}$ be the largest integer containing only prime factors of $m\cdot\frac{N}{m}=N$ such that $-n/f_{m,\frac{N}{m}}^2$ is still a negative discriminant. Then $f_{m, \frac{N}{m}} = \displaystyle\prod_{\substack{p \mid N}} f_p$
where $f_p$ is the $p$-part of $f_n$ and we have
\[
-\frac{n}{f_{m, \frac{N}{m}}^2}
= D_n \left( \frac{f_n}{\displaystyle\prod_{p \mid N} f_p} \right)^2
= D_n \big(\displaystyle\prod_{p \nmid N} f_p\big)^2.
\]
Applying equation \ref{eq:cl} for $n=\frac{n}{f_{m, \frac{N}{m}}^2}$, we find that
\[
H\!\left(\frac{n}{f_{m, \frac{N}{m}}^2}\right)
= L(0, \chi_{D_n}) \displaystyle\prod_{p \mid (f/\prod_{p|N} f_p)} D_p\left( \frac{n}{f_{m, \frac{N}{m}}^2} \right)
= L(0, \chi_{D_n}) \displaystyle\prod_{p \nmid N} D_p(n).
\]
Now, we have
\begin{align*}
H^{(m, \frac{N}{m})}(n)
&= H\left(-\frac{n}{f_{m, \frac{N}{m}}^2}\right)
   \displaystyle\prod_{p \mid m}
   \left(1 - \left(\frac{(-n / f_{m, \frac{N}{m}}^2)}{p} \right)\right) \\
   &\hspace{.3in} \cdot
   \displaystyle\prod_{p \mid \frac{N}{m}}
   \frac{2pf_p - p - 1 - \left(\frac{-n / f_{m, \frac{N}{m}}^2}{p}\right)(2f_p - p - 1)}{p - 1} \\[6pt]
&= L(0, \chi_{D_n})
   \displaystyle\prod_{p \nmid N} D_p(n)
   \displaystyle\prod_{p \mid m} C_p(n)
   \displaystyle\prod_{p \mid \frac{N}{m}} B_p(n).
\end{align*}
\end{proof}

\subsection{Proof of Theorem \ref{thm:PWtoLZ}}
Using Proposition \ref{thm:Localexp}, we can relate the two types of generalized Hurwitz class numbers. First we express Pei and Wang's class numbers in terms of Li--Skoruppa--Zhou class numbers, which is Theorem \ref{thm:PWtoLZ}. 
\begin{proof}[Proof of Theorem \ref{thm:PWtoLZ}]
   First, let us check the $n=0$ case. When $m=N$, the left hand side is $L_N(-1,\mathrm{id})$ and the right hand side is $H^{(N,1)}(0) = - \frac{1}{2} \prod_{p|N} (1-p) = H(-1,id)$.
   When $m < N$, the left-hand side is 0 and the right-hand side is 
   \begin{align*}
       \sum_{g|(\frac{N}{m})} u(g)  H(0) \prod_{p|\frac{N}{mg}} \frac{-p-1}{p-1} &= H(0) \cdot  \big(\prod_{p|(\frac{N}{m})} \frac{1}{p-1}\big) \cdot\sum_{g|(\frac{N}{m})} \mu \left(\frac{N}{mg} \right) =0.
    \end{align*}
    where in the last step we use $\displaystyle\sum_{g|(\frac{N}{m})} \mu(g)=0$ when $m<N$.
   
    For $n>0$, by Theorem \ref{thm:Localexp} and Lemma \ref{lem:linearlocalfactor} (part $2$), we have the following:
    \begin{align*}
    H_{m,N}(n) &= L(0, \chi_{D}) \displaystyle\prod_{p \nmid N} D_p(n) \displaystyle\prod_{p \mid m} C_p(n) \displaystyle\prod_{p \mid \frac{N}{m}} A_p(n)\\
    &=L(0, \chi_{D}) \displaystyle\prod_{p \nmid N} D_p(n) \displaystyle\prod_{p \mid m} C_p(n) \displaystyle\prod_{p \mid \frac{N}{m}}\left(\dfrac{p}{2(p+1)}B_p(n)+\dfrac{p}{2(p-1)}C_p(n)\right)
    \end{align*}
    This implies that:
    \begin{align*}
    \frac{H_{m,N}(n)}{\left( \displaystyle\prod_{p \mid \frac{N}{m}} p/2 \right)}  &=L(0, \chi_{D}) \displaystyle\prod_{p \nmid N} D_p(n) \displaystyle\prod_{p \mid m} C_p(n) \displaystyle\prod_{p \mid \frac{N}{m}}\left(\dfrac{1}{(p+1)}B_p(n)+\dfrac{1}{(p-1)}C_p(n)\right) \\
    &= L(0, \chi_{D}) \displaystyle\prod_{p \nmid N} D_p(n) \prod_{p \mid m} C_p(n)  \sum_{g | \frac{N}{m}}   \prod_{p \mid g} \frac{C_p(n) }{p-1} \prod_{p \mid \frac{N}{mg}} \frac{B_p(n)}{p+1}\\
     &= L(0, \chi_{D}) \displaystyle\prod_{p \nmid N} D_p(n)   \sum_{g | \frac{N}{m}} u(g)  \prod_{p \mid m g}  C_p(n) \prod_{p \mid \frac{N}{mg}}B_p(n)\\
    &=\sum_{g | \frac{N}{m}} u(g) \cdot H^{(mg, \frac{N}{mg})}(n). 
\end{align*}
\end{proof}

\subsection{Connecting various class numbers}
Next, we express Luo and Zhuo's class numbers in terms of Pei and Wang's class numbers.
\begin{thm}\label{thm:LZtoPW}
For any odd square-free $N$, we have
        $$
        \frac{1}{\displaystyle\prod_{p \mid \frac{N}{m}} (p+1)} H^{(m, \frac{N}{m})} (n)
        = \sum_{v | \frac{N}{m}} t(v) \cdot  H_{mv,N}(n) ,
        $$
        where
        $$
        t(v) = \left( \prod_{p \mid \tfrac{N}{mv}} \frac{2}{p} \right) 
        \left( \prod_{p \mid v} \frac{1}{1-p} \right).
        $$
\end{thm}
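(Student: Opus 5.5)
The plan is to run the proof of Theorem~\ref{thm:PWtoLZ} in reverse. We expand the $B_p$ factors of $H^{(m,\frac{N}{m})}(n)$ in terms of $A_p$ and $C_p$ using Lemma~\ref{lem:linearlocalfactor} (part 1), instead of expanding $A_p$ in terms of $B_p$ and $C_p$. As before, the case $n>0$ and the constant term $n=0$ are treated separately.

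\emph{Case $n>0$.} If $-n$ is not a discriminant, then both sides vanish by definition, so assume $-n=D_nf_n^2$. By \eqref{eq:LSZ} of Proposition~\ref{thm:Localexp}, dividing by $\prod_{p\mid \frac{N}{m}}(p+1)$ gives
\[
\frac{H^{(m,\frac{N}{m})}(n)}{\prod_{p\mid\frac{N}{m}}(p+1)}
= L(0,\chi_{D_n})\prod_{p\nmid N}D_p(n)\prod_{p\mid m}C_p(n)\prod_{p\mid\frac{N}{m}}\frac{B_p(n)}{p+1}.
\]
By Lemma~\ref{lem:linearlocalfactor}(1), each quotient $B_p(n)/(p+1)$ equals $\frac{2}{p}A_p(n)+\frac{1}{1-p}C_p(n)$. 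Since $N/m$ is square-free, expanding the product over $p\mid\frac{N}{m}$ yields a sum over divisors $v\mid\frac{N}{m}$: the primes dividing $v$ contribute the $C_p$ term and the remaining primes contribute the $A_p$ term. The term indexed by $v$ is therefore
\[
t(v)\,L(0,\chi_{D_n})\prod_{p\nmid N}D_p(n)\prod_{p\mid mv}C_p(n)\prod_{p\mid\frac{N}{mv}}A_p(n).
\]
By \eqref{eq:PW} applied with $mv$ in place of $m$ (note that $mv\mid N$), this is exactly $t(v)H_{mv,N}(n)$. Summing over $v$ gives the claim.

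\emph{Case $n=0$.} The left-hand side is
\[
\frac{-\frac{1}{12}\prod_{p\mid m}(1-p)\prod_{p\mid\frac{N}{m}}(p+1)}{\prod_{p\mid\frac{N}{m}}(p+1)}
= -\frac{1}{12}\prod_{p\mid m}(1-p).
\]
On the right-hand side, $H_{mv,N}(0)=0$ unless $mv=N$, so only the term $v=N/m$ survives. That term is
\[
t\!\left(\tfrac{N}{m}\right)L_N(-1,\mathrm{id})=\prod_{p\mid\frac{N}{m}}\frac{1}{1-p}\cdot\zeta(-1)\prod_{p\mid N}(1-p).
\]
Since $\zeta(-1)=-\frac{1}{12}$, this equals $-\frac{1}{12}\prod_{p\mid m}(1-p)$, which matches the left-hand side.

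The only delicate point is bookkeeping rather than any real obstacle. One has to check that, in the expansion, the indexing of $C_p$ versus $A_p$ over $mv$ and $N/(mv)$ lines up exactly with \eqref{eq:PW}, and that the constant term of $H^{(m,\frac{N}{m})}$ is normalized consistently with $L_N(-1,\mathrm{id})$, as was done in the $n=0$ check of Theorem~\ref{thm:PWtoLZ}.
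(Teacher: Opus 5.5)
Your proposal is correct and is essentially the paper's proof, which also expands each $B_p(n)/(p+1)$ via part (1) of Lemma~\ref{lem:linearlocalfactor} and regroups the terms by $v\mid\frac{N}{m}$ using \eqref{eq:PW} with $mv$ in place of $m$, mirroring the proof of Theorem~\ref{thm:PWtoLZ}. Your explicit $n=0$ check, where only $v=N/m$ survives and $t(N/m)\,L_N(-1,\mathrm{id})=-\frac{1}{12}\prod_{p\mid m}(1-p)$, fills in a detail the paper leaves implicit.
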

\begin{proof}
    The proof is analogous to that of Theorem \ref{thm:PWtoLZ}, using the first claim of Lemma \ref{lem:linearlocalfactor} instead of the second. 
\end{proof}

We can express $H^{(m,\frac{N}{m})}(n)$ in terms of Pei and Wang's class numbers for nontrivial characters  $\chi_{\ell}$ (i.e. $l\neq 1$) as follows:
\begin{thm}\label{thm:nontrivial}
   Let $\ell \neq 1$ be a divisor of $N$, where as before, $N$ is an odd square-free integer and $m | N$.  Then we have:
        $$
        \frac{1}{\displaystyle\prod_{p \mid \frac{N}{m}} (p+1)} H^{(m,\frac{N}{m})}(n)= \sum_{v \mid \frac{N}{m}} \frac{t(v)}{(\ell, \frac{N}{m}v)} H(\ell, mv, N; \ell n).
        $$
\end{thm}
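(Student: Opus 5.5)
The plan is to reduce Theorem~\ref{thm:nontrivial} to the trivial-character identity in Theorem~\ref{thm:LZtoPW}. First I would find a local-factor expansion of $H(\ell,mv,N;\ell n)$ analogous to Proposition~\ref{thm:Localexp}.

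\textbf{Identifying the discriminants.} Take $\varepsilon=(-1)^{(\ell-1)/2}$, so that $\varepsilon\ell$ is the fundamental discriminant attached to $\ell$ (recall $\ell$ is odd and square-free). The definition \eqref{eq:HellNdef} at argument $\ell n$ involves two factorizations: $-\varepsilon\ell n=D_{\ell,\ell n}f^2$ and $-\ell^2 n=D'_{\ell,\ell n}(f')^2$.
\begin{itemize}
\item From the second, $D'_{\ell,\ell n}=D_n$ with $f'=\ell f_n$. Hence the $L$-value $L_{mv}(0,\chi_{D_n})$ and the Euler-type factors $\bigl(1-(\tfrac{D_n}{p})p^{-1}\bigr)/(1-p^{-2})$ are exactly the ones in the trivial case.
\item From the first, $D_{\ell,\ell n}=(\varepsilon\ell)\cdot D_n$ up to removing square factors at primes dividing $(\ell,D_n)$.
\end{itemize}
I would then evaluate the twisted divisor sum $\sum_{a\mid f}\mu(a)\chi_{D_{\ell,\ell n}}(a)\chi_\ell'(a)\sigma_{mv,N,1}(f/a)$ prime by prime, as in Lemma~\ref{lem:SintoP}.

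\textbf{The local computation.} For $p\nmid N$, I claim $\chi_{\varepsilon\ell D_n}(p)\chi'_\ell(p)=(\tfrac{D_n}{p})$, because $(\tfrac{\varepsilon\ell}{p})=(\tfrac{p}{\ell})$ by quadratic reciprocity. Moreover the $p$-part of $f$ equals $f_p$, so the local factor is again $D_p(n)$. For $p\mid N$, the Möbius terms vanish just as before, since $\sigma_{mv,N,1}$ is trivial on powers of $p\mid mv$ and the identity on powers of $p\mid N/(mv)$.

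The delicate primes are $p\mid\ell$. There the $p$-part of $f$ may differ from $f_p$ by one power of $p$, depending on whether $p\mid D_n$. This discrepancy must be absorbed by the factor $\bigl(\frac{(\ell,D_{\ell,\ell n})}{(\ell,D_{\ell,\ell n},mv)}\bigr)$ together with the outer normalization $1/(\ell,\tfrac{N}{m}v)$. The target is
\[
\frac{H(\ell,mv,N;\ell n)}{(\ell,\frac{N}{m}v)}=H_{mv,N}(n)\quad\text{for } n>0.
\]
Equivalently, the product over $p\mid\ell$ of the correction factors (the extra $p$ in $f_p$ when $p\mid N/(mv)$, and the gcd factor) should collapse to exactly $(\ell,\tfrac{N}{m}v)$. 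I would verify this by splitting into the cases $p\mid mv$ and $p\mid N/(mv)$, and within each into $p\mid D_n$ and $p\nmid D_n$.

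\textbf{Conclusion.} Once the displayed identity is established, Theorem~\ref{thm:nontrivial} follows by substituting it into the right-hand side of Theorem~\ref{thm:LZtoPW}. The case $n=0$ needs a separate check. There $H(\ell,\cdot,\cdot;0)=0$ for $mv\neq N$, and the $mv=N$ term uses the constant-term convention, which I would compare against $H^{(m,N/m)}(0)$ directly as in the proof of Theorem~\ref{thm:PWtoLZ}.

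The main obstacle is the bookkeeping at primes $p\mid\ell$. There the shifts between $D_{\ell,\ell n}$, $D'_{\ell,\ell n}$, $f$ and $f'$ interact with the gcd normalizations, and the case analysis must produce exactly the factor $(\ell,\tfrac{N}{m}v)$. If the normalization turns out not to match, I would first recheck whether the intended twist is $\chi_{D_{\ell,\ell n}}\chi'_\ell$ evaluated with $\varepsilon\ell$ rather than $\ell$.
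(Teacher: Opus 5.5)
Your route is the paper's: the identity you aim for is Proposition~\ref{prop:Ape3} with $mv$ in place of $m$, and the theorem then follows by substituting it into Theorem~\ref{thm:LZtoPW}. The problem is the normalization you set as the target.

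\textbf{The normalization.} Read literally, $(\ell,\frac{N}{m}v)=\gcd(\ell,Nv/m)$. Since $\ell$ is square-free and $v\mid\frac{N}{m}$, this equals $(\ell,\frac{N}{m})$, which does not depend on $v$. The case analysis you outline at primes $p\mid\ell$ does not produce this.
\begin{itemize}
\item For $p\mid\frac{N}{mv}$ you get a factor $p$. When $p\nmid D_n$ it comes from the gcd ratio. When $p\mid D_n$ it comes from the extra $p$ in $f_{\ell n}=(\ell,D_n)f_n$, which $\sigma_{mv,N,1}$ picks up in full.
\item For $p\mid mv$ you get $1$. The gcd ratio is trivial at $p$, and $\sigma_{mv,N,1}$ is $1$ on $p$-powers. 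When $mv=N$, definition \eqref{eq:HNNdef} has no gcd factor at all.
\end{itemize}
So the true identity is $H(\ell,mv,N;\ell n)=(\ell,\frac{N}{mv})\,H_{mv,N}(n)$. The collapse you claim fails at every prime $p\mid(\ell,v)$.

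\textbf{The statement as printed is false.} It must be read with $(\ell,\frac{N}{mv})$, which is exactly what the paper's combination of Proposition~\ref{prop:Ape3} and Theorem~\ref{thm:LZtoPW} proves. For a counterexample, take $N=\ell=3$, $m=1$, $n=4$. Then $D_n=-4$ and $f_n=1$, so $B_3(4)=0$ (as $f_3=1$ and $\left(\frac{-4}{3}\right)=-1$), and the left side vanishes. On the right, $H(3,1,3;12)=\frac94$ and $H(3,3,3;12)=H_{3,3}(4)=1$.
\begin{itemize}
\item The printed coefficients give $\frac29\cdot\frac94-\frac16\cdot 1=\frac13$.
\item The coefficients with $(\ell,\frac{N}{mv})$ give $\frac12-\frac12=0$, as required.
\end{itemize}
Your $n=0$ check likewise closes only when the $v=\frac{N}{m}$ term carries $(\ell,1)=1$ rather than $(\ell,\frac{N}{m})$.

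Your fallback of rechecking the $\varepsilon\ell$ twist therefore points at the wrong culprit. Your reciprocity computation for $p\nmid N$ is correct.

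A smaller point: at primes $p\mid N$ with $p\nmid\ell$, the shape of $\sigma_{mv,N,1}$ does not kill the M\"obius terms. They are simply absent, because the divisor sum runs over $(a,N)=1$, as in Lemma~\ref{lem:SintoP}.
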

\begin{proof}
    This follows from combining Theorem \ref{thm:LZtoPW} and Proposition \ref{prop:Ape3}.
\end{proof}

\begin{prop}\label{prop:Ape3} Let $1<\ell\mid N$ and $m\mid N$. We have:
$$
H(\ell,m,N;\ell n)=\left(\ell,\frac{N}{m}\right)H_{m,N}(n).
$$
\end{prop}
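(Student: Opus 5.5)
The plan is to compare the defining formula \eqref{eq:HellNdef} (or \eqref{eq:HNNdef} when $m=N$) for $H(\ell,m,N;\ell n)$ term by term with the one for $H_{m,N}(n)=H(1,m,N;n)$. The first step is to identify the discriminant data attached to $\ell n$ in terms of $-n=D_nf_n^2$. The case $n=0$ is immediate: both sides vanish for $m<N$. For $m=N$ it reduces to checking the constant terms, and I would verify these directly.

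\emph{Step 1: the discriminant data of $\ell n$.} Since $\ell$ is odd and square-free, $\ell^*:=\varepsilon\ell\equiv 1\pmod 4$ is a fundamental discriminant.
\begin{itemize}
\item For the ``primed'' data we have $-\ell(\ell n)=\ell^2 D_nf_n^2$. Hence $D'_{\ell,\ell n}=D_n$ and $f'_{\ell n}=\ell f_n$, so
\[
L_m(0,\chi_{D'_{\ell,\ell n}})=L_m(0,\chi_{D_n}),
\]
and the product over $p\mid \frac{N}{m}$ is the same on both sides.
\item For the unprimed data we have $-\varepsilon\ell n=\ell^*D_nf_n^2$. Put $g:=\gcd(\ell,D_n)$. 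Then $\ell^*D_n/g^2$ is a fundamental discriminant, and I expect
\[
D_{\ell,\ell n}=\ell^*D_n/g^2,\qquad f_{\ell n}=gf_n.
\]
This is the step needing care: one must check the sign and $2$-adic conditions, using that $\ell$ is odd and that $g^2$ times a fundamental discriminant is never fundamental.
\end{itemize}
It follows that $(\ell,D_{\ell,\ell n})=\ell/g$.

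\emph{Step 2: the Möbius sum.} I would handle the sum $\sum_{a\mid gf_n}\mu(a)\chi_{D_{\ell,\ell n}}(a)\chi'_\ell(a)\sigma_{m,N,1}(gf_n/a)$ with the same convention as in Lemma~\ref{lem:SintoP}, namely $(a,N)=1$. This convention in any case kills the terms with $(a,\ell)>1$ because of the factor $\chi'_\ell(a)$. For $a$ coprime to $\ell$ (and odd), quadratic reciprocity in the form $\left(\frac{\ell^*}{a}\right)=\left(\frac{a}{\ell}\right)$ gives
\[
\chi_{\ell^*D_n/g^2}(a)\,\chi'_\ell(a)=\chi_{D_n}(a)\left(\frac{a}{\ell}\right)^2=\chi_{D_n}(a).
\]
Next, since $g\mid N$ is coprime to $a$, the computation inside the proof of Lemma~\ref{lem:SintoP} gives
\[
\sigma_{m,N,1}(gf_n/a)=\gcd\!\left(g,\tfrac{N}{m}\right)\sigma_{m,N,1}(f_n/a).
\]
The reason is that a prime $p\mid g$ with $p\mid m$ contributes a factor $1$, while one with $p\mid \frac Nm$ contributes a factor $p$. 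Hence the Möbius sum for $\ell n$ equals $\gcd(g,\frac Nm)$ times the Möbius sum for $n$.

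\emph{Step 3: collecting the scalar factors.} The remaining factor of \eqref{eq:HellNdef} is $\frac{(\ell,D_{\ell,\ell n})}{(\ell,D_{\ell,\ell n},m)}=\frac{\ell/g}{(\ell/g,m)}$, and since $N$ is square-free this equals $\gcd(\ell/g,\frac Nm)$. Multiplying by the factor from Step 2 gives
\[
\gcd\!\left(\tfrac{\ell}{g},\tfrac{N}{m}\right)\gcd\!\left(g,\tfrac Nm\right)=\left(\ell,\tfrac{N}{m}\right),
\]
because $g$ and $\ell/g$ are coprime. This is exactly the claimed constant. The main obstacle is Step 1 together with the case bookkeeping: one must make sure the vanishing (``else'') cases match, i.e.\ that $\ell n$ admits the required factorizations exactly when $n$ does. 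One must also check that the extra primes $p\mid g$ in $f_{\ell n}$ do not contribute beyond what is accounted for above.
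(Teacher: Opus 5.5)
Your proposal is correct and takes essentially the paper's route. The paper first rewrites $H(\ell,m,N;n)$ as a product of local factors $A_p(\ell,n)$, $C_p(\ell,n)$, $D_p(\ell,n)$ times the $\gcd$ ratio, whereas you compare the M\"obius sums globally; otherwise it uses exactly your identifications $D'_{\ell,\ell n}=D_n$, $D_{\ell,\ell n}=\varepsilon\ell D_n/(\ell,D_n)^2$, $f_{\ell n}=(\ell,D_n)f_n$ and the same $\gcd$ bookkeeping giving $\bigl(\ell,\tfrac{N}{m}\bigr)$.

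One small point: $a$ can be even when $2\mid f_n$, so state the reciprocity step as $\chi_{\ell^*}(a)=\left(\frac{a}{\ell}\right)$ for all $a\ge 1$ coprime to $\ell$; at $a=2$ this is the supplementary law, and the paper uses it implicitly in $D_2(\ell,\ell n)=D_2(n)$.
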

\begin{proof}
First we generalize our local factors. Let $1<\ell$ be a divisor of $N$, $-\varepsilon n=D_{\ell,n}(f_n)^2$ and $-\ell n=D_{\ell,n}'(f_n')^2$ (as in the definition of $H(\ell,m,N;n)$). Let
\begin{align*}
    A_p(\ell, n) &= \frac{f_p\left(1 - \left(\frac{D_{\ell,n}'}{p}\right) p^{-1}\right)}{1 - p^{-2}} = \frac{p f_p \left(p - \left(\frac{D_{\ell,n}'}{p}\right)\right)}{(p-1)(p+1)} \\
    C_p(\ell, n) &= 1 - \left(\frac{D_{\ell,n}'}{p}\right) \\
    D_p(\ell, n) &:= \sigma_1(f_p) - \sigma_1\left(\frac{f_p}{p}\right) \left(\frac{D_{\ell,n}}{p}\right) \chi_\ell'(p)
\end{align*}
where $f_p$ is the $p$-part of $f_n$. Note that in case $\ell=1$, we have $D_{\ell,n}=D_{\ell,n}'=D_n$, and hence $A_p(\ell,n)=A_p(n)$, $ C_p(\ell,n)=C_p(n)$ and $D_p(\ell,n)=D_p(n).$
We can extend the proof of Lemma \ref{lem:SintoP} to show the relation
     $$\displaystyle\sum_{\substack{d | f_n \\ (d, N) = 1}} \mu(d) \chi'_l(d) \left( \frac{D_{\ell,n}}{d} \right) \sigma_{m, N, 1} \left( \frac{f_n}{d} \right)
    = \displaystyle \prod_{p | \frac{N}{m}} f_p  \cdot \prod_{p \nmid N} D_p(\ell,n)$$
    This enables us to extend the proof of Proposition \ref{thm:Localexp} to show
        $$H(\ell, m, N; n) = L(0, \chi_{D_{\ell,n}'}) \displaystyle\prod_{p \nmid N} D_p(\ell, n) \displaystyle\prod_{p \mid m} C_p(\ell, n)
        \displaystyle\prod_{p \mid \frac{N}{m}} A_p(\ell, n) \cdot \left( \frac{(\ell, D_{\ell,n})}{(\ell, D_{\ell,n}, m)} \right)$$
     
    Finally, we substitute $\ell n$ for $n$ in the previous equation and use the following relations to connect $H(\ell,m,N;\ell n)$ with $H_{m,N}(n)$:  
    \begin{enumerate}
    \item $D'_{\ell,\ell n} = D_n, \quad D_{\ell,\ell n} = \varepsilon \frac{\ell D_n}{(l, D_n)^2}, \quad f_{\ell n} = (\ell, D_n) f_n$
    
    \item $C_p(\ell, \ell n) = C_p(n)$
    
    \item $\displaystyle\prod_{p | \frac{N}{m}} A_p(\ell, \ell n) = (\ell, D_n, \frac{N}{m}) \displaystyle\prod_{p | \frac{N}{m}} A_p(n)$
    
    \item $\frac{(\ell, D_{\ell,\ell n})}{(\ell, D_{\ell,\ell n}, m)} = (\ell, \frac{N}{m}) (\ell, D_{n}, \frac{N}{m})^{-1}$
    
    \item For $p \nmid N, \quad D_p(\ell, \ell n) = D_p(n)$.
\end{enumerate}

\end{proof}
\section{Corollaries of Theorems \ref{thm:PWtoLZ} and \ref{thm:LZtoPW}}\label{sec:corollaries}
We give several consequences of Theorems \ref{thm:PWtoLZ} and \ref{thm:LZtoPW}, which make use of the connections between these generalized Hurwitz class numbers and ternary quadratic forms, the Eisenstein-plus space, and harmonic Maass forms. We also show how the expressions used in the proofs of Theorems \ref{thm:PWtoLZ} and \ref{thm:LZtoPW} can be used to obtain simple identities for the $H_{m,N}(n)$ values, generalizing a theorem in \cite{Beckwith-Mono-1}.

\subsection{Proof of Theorem \ref{thm:Main}} \label{sec:Maintm}
First, we will need an analog of Theorem \ref{thm:L&Z} for divisors $N^{\mathrm{even}}$ of $N$ with an even number of prime factors.

\begin{prop}\label{prop:L&Z}
    For any odd square-free $N$ and any divisor $N^{\mathrm{even}}$ of $N$ with an even number of prime factors, and a prime divisor $q$ of $N^{\mathrm{even}}$, we have:
\begin{align*}
    &H^{(N^{\mathrm{even}},\, N/N^{\mathrm{even}})}(n)\\
&= 2^{\omega(N)+2}
\sum_{Q \in G_{4N/q,\,16N^2/q^2,\,N^{\mathrm{even}}/q}}
\frac{R_Q(n)}{|\mathrm{Aut}(Q)|}
- 2^{\omega(N)+1}
\sum_{Q \in G_{4N,\,16N^2/q^2,\,N^{\mathrm{even}}/q}}
\frac{R_Q(n)}{|\mathrm{Aut}(Q)|}.
\end{align*}
\end{prop}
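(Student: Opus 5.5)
The plan is to reduce the statement to Theorem~\ref{thm:L&Z} applied at two different levels, together with a local identity at the single prime $q$. Write $E := N^{\mathrm{even}}$. Since $E$ has an even number of prime factors and $q \mid E$, the divisor $E/q$ has an odd number of prime factors. So Theorem~\ref{thm:L&Z} applies in two ways.

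\begin{itemize}
\item At level $N/q$, with $N^{\mathrm{odd}} = E/q$. Since $(N/q)/(E/q) = N/E$, it expresses the weighted sum over $G_{4N/q,\,16N^2/q^2,\,E/q}$ as a multiple of $H^{(E/q,\,N/E)}(n)$.
\item At level $N$, with $N^{\mathrm{odd}} = E/q$. It expresses the weighted sum over the level-$4N$ genus with anisotropic set $E/q$ as a multiple of $H^{(E/q,\,qN/E)}(n)$.
\end{itemize}

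So the claimed identity becomes a linear relation among three modified class numbers: $H^{(E,\,N/E)}$, $H^{(E/q,\,N/E)}$ and $H^{(E/q,\,qN/E)}$. These differ only in how the prime $q$ is treated: in the first it lies in $N_1$, in the second it lies in neither $N_1$ nor $N_2$, and in the third it lies in $N_2$.

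The key step is to compare the three class numbers prime by prime using Proposition~\ref{thm:Localexp}. For $n>0$, formula \eqref{eq:LSZ} gives each of them as $L(0,\chi_{D_n})$ times a product of local factors. The factors at primes $p\neq q$ agree in all three:
\begin{itemize}
\item $D_p(n)$ for $p\nmid N$;
\item $C_p(n)$ for $p\mid E/q$;
\item $B_p(n)$ for $p\mid N/E$.
\end{itemize}
At $q$ the factors are $C_q(n)$, $D_q(n)$ and $B_q(n)$ respectively.

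For the middle case I must check that the proof of \eqref{eq:LSZ} still applies when $q\nmid N_1N_2$, i.e.\ with $N$ replaced by $N/q$. In that case $f_{N_1,N_2}$ omits the $q$-part of $f_n$, so \eqref{eq:cl} applied to $n/f_{N_1,N_2}^2$ yields the factor $D_q(n)$, as desired. Lemma~\ref{lem:linearlocalfactor}(4) gives $C_q = 2D_q - B_q$, and hence
\[
H^{(E,\,N/E)}(n) = 2H^{(E/q,\,N/E)}(n) - H^{(E/q,\,qN/E)}(n).
\]
Substituting the two instances of Theorem~\ref{thm:L&Z} then gives the proposition.

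The case $n=0$ I will check directly from the constant terms $-\frac1{12}\prod_{p\mid N_1}(1-p)\prod_{p\mid N_2}(p+1)$. The relation becomes $(1-q) = 2\cdot 1 - (q+1)$, which holds.

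I expect the main obstacle to be the bookkeeping of normalizations rather than any conceptual step. The power of $2$ in Theorem~\ref{thm:L&Z} depends on $\omega$ of the level, and $\omega(N/q)=\omega(N)-1$, so the two prefactors $2^{\omega(N)+2}$ and $2^{\omega(N)+1}$ must be matched against $2^{\omega(N/q)+1}$ and $2^{\omega(N)+1}$ with care. The discriminant attached to the level-$4N$ genus must also be read as the one given by Theorem~\ref{thm:L&Z}, namely $16N^2$. If the constants do not line up exactly as stated, I will trust the local identity above and adjust the normalization of the first genus sum accordingly, since the displayed relation among class numbers is forced by Lemma~\ref{lem:linearlocalfactor}.
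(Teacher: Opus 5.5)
Your route is the paper's. It also derives $H^{(E,N/E)}(n)=2H^{(E/q,N/E)}(n)-H^{(E/q,qN/E)}(n)$ (Lemma~\ref{lem:L&Z}) from Lemma~\ref{lem:linearlocalfactor}(4), and then substitutes Theorem~\ref{thm:L&Z} at levels $4N/q$ and $4N$, with $E=N^{\mathrm{even}}$ as in your notation. Your handling of the three-term relation is slightly more careful than the paper's in two ways:
\begin{enumerate}
\item You substitute $C_q=2D_q-B_q$ directly into the common product. The paper instead divides by $H^{(N_1,N_2)}(n)$, which is meaningless whenever that number vanishes, e.g.\ when $\left(\frac{D_n}{q}\right)=1$.
\item You check $n=0$ separately, which the paper's lemma skips.
\end{enumerate}

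The normalization you deferred, however, does not come out as printed, and you should resolve it rather than hedge. Theorem~\ref{thm:L&Z} at level $4N/q$ gives
$H^{(E/q,N/E)}(n)=2^{\omega(N/q)+1}\sum_{Q\in G_{4N/q,16N^2/q^2,E/q}}R_Q(n)/|\mathrm{Aut}(Q)|$.
Since $2^{\omega(N/q)+1}=2^{\omega(N)}$, the first sum acquires the coefficient $2\cdot 2^{\omega(N)}=2^{\omega(N)+1}$, not $2^{\omega(N)+2}$. As you noticed, the level-$4N$ genus must be $G_{4N,16N^2,E/q}$, and its coefficient is $2^{\omega(N)+1}$.

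The printed identity is in fact false. At $n=0$, reading the second genus as $G_{4N,16N^2,E/q}$, its right side equals $4H^{(E/q,N/E)}(0)-H^{(E/q,qN/E)}(0)$. This exceeds the left side by
$2H^{(E/q,N/E)}(0)=-\frac16\prod_{p\mid E/q}(1-p)\prod_{p\mid N/E}(p+1)\neq 0$.
For example, with $N=E=35$ and $q=7$ it gives $-\frac43$ instead of $H^{(35,1)}(0)=-2$.

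The paper's proof makes exactly your substitution and simply writes $2^{\omega(N)+2}$, so the slip is the paper's. The same slip produces the spurious factor $2$ in $T_{mg}$ for $\mu(mg)=1$ in Theorem~\ref{thm:Main}. What your argument correctly proves is
\[
H^{(E,\,N/E)}(n)=2^{\omega(N)+1}\Bigl(\sum_{Q\in G_{4N/q,\,16N^2/q^2,\,E/q}}\frac{R_Q(n)}{|\mathrm{Aut}(Q)|}-\sum_{Q\in G_{4N,\,16N^2,\,E/q}}\frac{R_Q(n)}{|\mathrm{Aut}(Q)|}\Bigr).
\]
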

\begin{proof}
    By Lemma \ref{lem:L&Z} and Theorem \ref{thm:L&Z},
\begin{align*}
&H^{(N^{\mathrm{even}},\, N/N^{\mathrm{even}})}(n)
= 2\,
   H^{(N^{\mathrm{even}}/q,\, N/N^{\mathrm{even}})}(n)
   - H^{(N^{\mathrm{even}}/q,\, Nq/N^{\mathrm{even}})}(n) \\
&= 2^{\omega(N)+2}
   \sum_{Q \in G_{4N/q,\,16N^2/q^2,\,N^{\mathrm{even}}/q}}
   \frac{R_Q(n)}{|\mathrm{Aut}(Q)|}
   - 2^{\omega(N)+1}
   \sum_{Q \in G_{4N,\,16N^2/q^2,\,N^{\mathrm{even}}/q}}
   \frac{R_Q(n)}{|\mathrm{Aut}(Q)|}.
\end{align*}
\end{proof}

\begin{lemma}\label{lem:L&Z} Let $N_1>1$ and $N_2$ be two odd coprime square-free integers, $q$ be a prime factor of $N_1$. Then we have
$$
H^{(N_1,N_2)}(n)=2H^{(N_1/q,N_2)}(n)-H^{(N_1/q,N_2q)}(n).
$$
\end{lemma}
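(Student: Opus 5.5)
We reduce the identity to a relation between the local factors at the single prime $q$, using the factorization of Proposition \ref{thm:Localexp}. For $n>0$ write $-n=D_nf_n^2$. By \eqref{eq:LSZ}, each of the three modified class numbers $H^{(N_1,N_2)}(n)$, $H^{(N_1/q,N_2)}(n)$ and $H^{(N_1/q,N_2q)}(n)$ equals
$$L(0,\chi_{D_n})\prod_{p\nmid N_1N_2q}D_p(n)\prod_{p\mid N_1/q}C_p(n)\prod_{p\mid N_2}B_p(n)$$
times a factor attached to $q$. Since $q\mid N_1$, we have $N_1N_2=(N_1/q)N_2q$, and all three share the same set of primes dividing $N_1N_2$. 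The $q$-factor is $C_q(n)$ for $(N_1,N_2)$, $D_q(n)$ for $(N_1/q,N_2)$ (now $q\nmid$ the level), and $B_q(n)$ for $(N_1/q,N_2q)$.

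Proposition \ref{thm:Localexp} is stated for a pair $(m,N/m)$. Applying it to $(N_1/q,N_2)$, whose product is $N_1N_2/q$, is legitimate: the derivation only uses the definition of $H^{(N_1,N_2)}$ and \eqref{eq:cl}, and the $p$-part of $f_{N_1,N_2}$ at $q$ is simply not removed. So the primes not dividing $N_1N_2/q$ contribute $D_p(n)$, including $p=q$. We will verify this briefly.

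The claim for $n>0$ is therefore equivalent to
$$C_q(n)=2D_q(n)-B_q(n),$$
which is exactly Lemma \ref{lem:linearlocalfactor}(4) rearranged: $D_q=\tfrac12B_q+\tfrac12C_q$. Multiplying by the common factor gives the identity.

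For $n=0$ we use the explicit definition $H^{(N_1,N_2)}(0)=-\tfrac1{12}\prod_{p\mid N_1}(1-p)\prod_{p\mid N_2}(p+1)$. Dividing out the common factor, the $q$-contributions are $(1-q)$ on the left and $2\cdot1-(q+1)=1-q$ on the right, so they agree.

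The main obstacle is the bookkeeping of the first step: checking that removing $q$ from the set of primes defining $f_{N_1,N_2}$ genuinely produces the factor $D_q(n)$ through \eqref{eq:cl}, where the primes dividing $f_n/\prod_{p\mid N_1N_2/q}f_p$ include $q$. When $q\nmid f_n$, both $D_q(n)=1$ and the classical formula give trivial contributions, so that case is consistent as well.
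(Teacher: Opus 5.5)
Your proposal is correct and follows the paper's proof. Both factor the three modified class numbers via Proposition~\ref{thm:Localexp}, so that they differ only in the $q$-factor ($C_q(n)$, $D_q(n)$, $B_q(n)$ respectively), and then conclude with Lemma~\ref{lem:linearlocalfactor}(4). Your version is slightly cleaner in two respects. First, multiplying the common factor by $2D_q(n)-B_q(n)=C_q(n)$ avoids the paper's division by $H^{(N_1,N_2)}(n)$, which vanishes, for example, when $\left(\frac{D_n}{q}\right)=1$. Second, you check the $n=0$ case, which the paper omits.
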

\begin{proof}
    Let $N = N_1 N_2$ and $-n=D_nf_n^2$ where $D_n$ is a fundamental discriminant. By Proposition \ref{thm:Localexp}, we have
\begin{align*}
H^{(N_1,N_2)}(n)
&= L(0,\chi_{D_n})
   \prod_{\substack{p \mid f_n \\ p \nmid N}} D_p(n)
   \prod_{p \mid N_2} B_p(n)
   \prod_{p \mid N_1} C_p(n) .
\end{align*}
Similarly,
\begin{align*}
H^{(N_1/q,N_2)}(n)
&= L(0,\chi_{D_n})
   \prod_{\substack{p \mid f_n \\ p \nmid N/q}} D_p(n)
   \prod_{p \mid N_2} B_p(n)
   \prod_{p \mid N_1/q} C_p(n) ,
\end{align*}
and
\begin{align*}
H^{(N_1/q,N_2q)}(n)
&= L(0,\chi_{D_n})
   \prod_{\substack{p \mid f_n \\ p \nmid N}} D_p(n)
   \prod_{p \mid N_2q} B_p(n)
   \prod_{p \mid N_1/q} C_p(n) .
\end{align*}
Therefore,
$$
\frac{2H^{(N_1/q,N_2)}(n)-H^{(N_1/q,N_2q)}(n)}{H^{(N_1,N_2)}(n)}
=2\frac{D_q(n)}{C_q(n)}-\frac{B_q(n)}{C_q(n)}=\frac{2D_q(n)-B_q(n)}{C_q(n)}=1,
$$
where the last equality follows from the last part of Lemma \ref{lem:linearlocalfactor}
\end{proof}

We are now ready to prove Theorem \ref{thm:Main}.
\begin{proof}[Proof of Theorem \ref{thm:Main}]
    Theorem \ref{thm:PWtoLZ} implies that
    \begin{align*}
    \frac{2^{\omega(\frac{N}{m})} m}{N} \Hs_{m,N}(\tau) = \sum_{g |\frac{N}{m}} u(g)\Big(\sum_{n\ge 0} H^{(mg, \frac{N}{mg})}(n)q^n\Big).
    \end{align*} 
    Using Theorem \ref{thm:L&Z} for the terms $g$ with $\mu(mg)=-1$ and Proposition \ref{prop:L&Z} when $\mu(mg)=+1$, we can show that $\displaystyle\sum_{n\ge 0} H^{(mg, \frac{N}{mg})}(n)q^n=2^{\omega(N)+1}T_{mg,N}(\tau)$.
\end{proof}

\begin{exa}
    Consider $m=N=11$, Theorem \ref{thm:Main} implies
    \[
    \Hs_{11,11}(\tau)=\displaystyle\sum_{Q \in G_{44,16*121,11}}\frac{\theta_Q(\tau)}{|\mathrm{Aut}(Q)|}=\frac{\theta_{Q_1}(\tau)}{|\mathrm{Aut}(Q_1)|}+\frac{\theta_{Q_2}(\tau)}{|\mathrm{Aut}(Q_2)|}
    \]
    where $Q_1(x,y,z)=3x^2-2xy-2xz+15y^2+15z^2-14yz$ and $Q_2(x,y,z)=4x_1^2+11x_2^2+12x_3^2+4x_1x_3$.
    Equivalently,
    \[
    \frac{1}{|\mathrm{Aut}(Q_1)|}\big(\theta_{Q_1}(\tau)-\frac{6}{5}\sum \Hs_{11,11}(\tau)\big)+\frac{1}{|\mathrm{Aut}(Q_2)|}\big(\theta_{Q_2}(\tau)-\frac{6}{5}\sum \Hs_{11,11}(\tau)\big)=0.
    \]
    In other words, Theorem \ref{thm:Main} tells us that the two cusp forms for Proposition \ref{thm:main2}, in this case, are linearly dependent.
\end{exa}
\begin{exa}
    Consider $m=N=35$. Theorem \ref{thm:Main} tells us that we have $2$ different ways to express $\Hs_{35,35}(\tau)$ (depending on the choice of $q$) as:
    \begin{align*}
        \Hs_{35,35}(\tau)&=\displaystyle\sum_{Q \in G_{4*5,16*25,5}}\frac{\theta_Q(\tau)}{|\mathrm{Aut}(Q)|}-\displaystyle\sum_{Q \in G_{4*35,16*1225,5}}\frac{\theta_Q(\tau)}{|\mathrm{Aut}(Q)|}\\
        &=\Big(\frac{\theta_{Q_1}}{|Aut(Q_1)|}\Big)-\Big(\frac{\theta_{Q_2}}{|Aut(Q_2)|}+\frac{\theta_{Q_3}}{|Aut(Q_3)|}+\frac{\theta_{Q_4}}{|Aut(Q_4)|}\Big)
    \end{align*}
    and
    \begin{align*}
        \Hs_{35,35}(\tau)&=\displaystyle\sum_{Q \in G_{4*7,16*49,7}}\frac{\theta_Q(\tau)}{|\mathrm{Aut}(Q)|}-\displaystyle\sum_{Q \in G_{4*35,16*1225,7}}\frac{\theta_Q(\tau)}{|\mathrm{Aut}(Q)|}\\
        &=\Big(\frac{\theta_{Q_5}}{|Aut(Q_5)|}\Big)-\Big(\frac{\theta_{Q_6}}{|Aut(Q_6)|}+\frac{\theta_{Q_7}}{|Aut(Q_7)|}\Big)
    \end{align*}
    where
    \begin{align*}
        &Q_1(x,y,z)(=Q_5(x,y,z)\text{ in the introduction})=3x^2+7y^2+7z^2-6yz+2xy+2yz \in G_{20,400,5},\\
        &Q_2(x,y,z)=7x^2+20y^2+40z^2+20yz \in G_{140,19600,5},\\
        &Q_3(x,y,z)=3x^2+47y^2+47z^2-46yz-2xz-2xy \in G_{140,19600,5},\\
        &Q_4(x,y,z)=12x^2+12y^2+35z^2-4xy\in G_{140,19600,5},\\
        &Q_5(x,y,z)(=Q_7(x,y,z)\text{ in the introduction})=4x^2+7y^2+8z^2-4xz\in G_{28,748,7},\\
        &Q_6(x,y,z)=4x^2+35y^2+36z^2-4xz \in G_{140,19600,7},\\
        &Q_7(x,y,z)=11x^2+15y^2+39z^2-10yz-6xz-10xy \in G_{140,19600,7}.
    \end{align*}
    The above ternary quadratic forms come from the examples in \cite{LuoZhou}. In particular, we have the following equation among theta series of ternary quadratic forms:
    \begin{align*}
        &\Big(\frac{\theta_{Q_1}}{|Aut(Q_1)|}\Big)-\Big(\frac{\theta_{Q_2}}{|Aut(Q_2)|}+\frac{\theta_{Q_3}}{|Aut(Q_3)|}+\frac{\theta_{Q_4}}{|Aut(Q_4)|}\Big)\\
        &=\Big(\frac{\theta_{Q_5}}{|Aut(Q_5)|}\Big)-\Big(\frac{\theta_{Q_6}}{|Aut(Q_6)|}+\frac{\theta_{Q_7}}{|Aut(Q_7)|}\Big).\
    \end{align*}    
    This equation admits the following interpretation. By Theorem~\ref{thm:L&Z}, the theta series attached to ternary quadratic forms within a fixed genus are governed by the generating functions of generalized Hurwitz class numbers. Consequently, relations among theta series arising from different genera are induced by corresponding relations among these generalized Hurwitz class numbers, of which there are many.
\end{exa}

\subsection{Structure of the Eisenstein plus space and proof of Proposition \ref{thm:main2}}

\begin{proof}[Proof of Proposition \ref{thm:main2}]
The first claim of this proposition follows directly from Theorems~\ref{thm:PW} and \ref{thm:Main}. For the second claim, let $Q$ be any quadratic form on the left hand side Theorem \ref{thm:L&Z} with $N =p$ and $N^{odd} = p$. 
By Theorem \ref{thm:L&Z} and Theorem \ref{thm:PWtoLZ}, we have
$$
T_{p,p}(\tau)
= \frac{1}{2^{\omega(N)+1}} \sum_{n \ge 0} H^{(p,1)}(n)\, q^n
= \frac{1}{2^{\omega(N)+1}} \Hs_{p,p}(\tau).
$$
Moreover, the coefficients of $\theta_Q(\tau)$ in Theorem \ref{thm:L&Z} are all positive. Since $\Hs_{p,p}(\tau)$ lies in the plus space, it follows that $\theta_Q(\tau)$ also lies in the plus space. Consequently, up to a cusp form, $\theta_Q(\tau)$ is a scalar multiple of $\Hs_{p,p}(\tau)$.
\end{proof}

\begin{rmk}
    First, let us recall that we have defined $\Hs_{\ell,m,N}(\tau) = \sum_{n=0}^{\infty} H( \ell, m, N; n) q^n$. For $m \ge1$, $m \mid N$, we define
$$
\Hs^{(m, \frac{N}{m})}(\tau) := \sum_{n=0}^{\infty} H^{(m, \frac{N}{m})}(n) q^n.
$$
    Then the first claim of Proposition \ref{thm:main2} is equivalent with the statement that the set \allowbreak $\left\{ \Hs^{(m, \frac{N}{m})}(\tau)\right\}_{\substack{m \mid N \\ m \neq 1}}$ is a basis of $E_{3/2}^+(4N)$ (from the proof of Theorem \ref{thm:Main}). More specifically, for any $m |N$,
$$
\operatorname{span} \{ \Hs_{1,d,N}(\tau): m|d|N \} = \operatorname{Span} \{ \Hs^{(d,N/d)}(\tau): m|d|N \} .
$$ 
This means that the change-of-basis matrix from $\{\Hs^{(m, N/m)}(\tau)\}_{\substack{m \mid N \\ m \neq 1}}$ to $\{\Hs_{1,m,N}(\tau)\}_{\substack{m \mid N \\ m \neq 1}}$ is upper triangular when the former basis is ordered by increasing number of divisors of $m$, and the latter is ordered accordingly.
\end{rmk}

\subsection{Modular properties of $\Hs^{(m,\frac{N}{m})}(\tau)$}\label{sec:Modularity}

Using Theorem \ref{thm:PWtoLZ}, we can describe the modular properties of the $\Hs^{(m,\frac{N}{m})}(\tau)$ functions. For the case of $\Hs^{(1,N)}(\tau)$, we need to define harmonic Maass forms.

The \emph{weight $k$ hyperbolic Laplace operator} is defined as
\begin{align} \label{eq:Deltadef}
\Delta_k \coloneqq  -v^2\left(\frac{\partial^2}{\partial u^2}+\frac{\partial^2}{\partial v^2}\right) + ikv\left(\frac{\partial}{\partial u} + i\frac{\partial}{\partial v}\right).
\end{align}

We say that a smooth function $f:\mathbb{H} \to \mathbb{C}$ is a \emph{harmonic Maass form of weight $k$} on $\Gamma_0(N)$, if $f$ has the following properties:
\begin{enumerate}[label={\rm  (\roman*)}]
\item we have $f\vert_k \gamma = f$ for all $\gamma \in \Gamma_0(N)$, 
\item we have $\Delta_k f = 0$ for all $\tau \in \Hb$,
\item we have that $f$ is of at most linear exponential growth towards all cusps of $\Gamma_0(N)$.
\end{enumerate}

\emph{Proof of Corollary \ref{thm:harmonic-Maass-form}:}
For the first claim, observe that for $m > 1$, the right-hand side of Theorem \ref{thm:PWtoLZ} expresses $\Hs^{(m,\frac{N}{m})}(\tau)$ in terms of functions $H_{1,d,N} (\tau)$ with $d>1$. Recall that \cite{pei-wang} shows that $H_{1,d,N} (\tau)$ belongs to $E_{\frac{3}{2}}^+(4N)$ whenever $d>1$. So, we have $\Hs^{(m,\frac{N}{m})}(\tau) \in E_{\frac{3}{2}}^+(4N).$

Theorem 1.2 of \cite{Beckwith-Mono-1} asserts that the function
\begin{align*}
\frac{1}{N}\sum_{n \geq 1} H_{1,N}(n)q^n + \Big(\prod_{\substack{p \text{ prime} \\ p \mid N}} \frac{1}{p+1}\Big) \Bigg(\frac{1}{8\pi\sqrt{v}} + \frac{1}{4\sqrt{\pi}} \sum_{n \geq 1} n \Gamma\left(-\frac{1}{2},4\pi n^2 v\right) q^{-n^2}\Bigg)
\end{align*}
is a weight $\frac{3}{2}$ harmonic Maass form on $\Gamma_0(4N)$,
which generalized Zagier's \cite{Zagier1} weight $\frac{3}{2}$ non-holomorphic modular completion of $\Hs_{1,1}$ on $\Gamma_0(4).$ Proposition \ref{thm:main2} implies
$$
\frac{1}{N}H_{1,N}(n) = 2^{-\omega(N)} \sum_{g |N} u(g) H^{(g, N/g)} (n). 
$$
which gives us
\begin{align*}
&\frac{1}{N}\sum_{n \geq 1} H_{1,N}(n)q^n + \Big(\prod_{\substack{p \text{ prime} \\ p \mid N}} \frac{1}{p+1}\Big) \Bigg(\frac{1}{8\pi\sqrt{v}} + \frac{1}{4\sqrt{\pi}} \sum_{n \geq 1} n \Gamma\left(-\frac{1}{2},4\pi n^2 v\right) q^{-n^2}\Bigg) \\
&= 2^{-\omega(N)} \sum_{g |N} u(g)\cdot \Big(\sum _{n\ge 0}H^{(g, N/g)} (n)q^n\Big) + \Big(\prod_{\substack{p \text{ prime} \\ p \mid N}} \frac{1}{p+1}\Big) \Bigg(\frac{1}{8\pi\sqrt{v}} + \frac{1}{4\sqrt{\pi}} \sum_{n \geq 1} n \Gamma\left(-\frac{1}{2},4\pi n^2 v\right) q^{-n^2}\Bigg)
\end{align*}
Since the left hand side as well as generating functions for $H^{(g, N/g)} (n)$ (i.e. $\Hs^{(m,\frac{N}{m})}(\tau)$) for $g>1$ are modular with weight $\frac{3}{2}$ on $\Gamma_0(4N)$, it follows that
\begin{align*}
&\sum_{n \geq 1}  2^{-w(N)} u(1) H^{(1, N)} (n) q^n + \Big(\prod_{\substack{p \text{ prime} \\ p \mid N}} \frac{1}{p+1}\Big) \Bigg(\frac{1}{8\pi\sqrt{v}} + \frac{1}{4\sqrt{\pi}} \sum_{n \geq 1} n \Gamma\left(-\frac{1}{2},4\pi n^2 v\right) q^{-n^2}\Bigg)\\
&=\Big(\prod_{\substack{p \text{ prime} \\ p \mid N}} \frac{1}{p+1}\Big)\Bigg( 2^{-w(N)}\sum_{n \geq 1}  H^{(1, N)} (n) q^n + \frac{1}{8\pi\sqrt{v}} + \frac{1}{4\sqrt{\pi}} \sum_{n \geq 1} n \Gamma\left(-\frac{1}{2},4\pi n^2 v\right) q^{-n^2}\Bigg)
\end{align*}
is weight $\frac{3}{2}$ modular for $\Gamma_0(4N)$ as well. From the Fourier expansion we deduce that this function is a weight $\frac{3}{2}$ harmonic Maass form on $\Gamma_0(4N)$.

\qed
\subsection{Relations among the Pei-Wang class numbers}
    \begin{thm}[Theorem 1.1 \cite{Beckwith-Mono-1}]\label{thm:BM1.1}
        If $N$ is a prime number, then
    \begin{align*}
        H(n)=\dfrac{N+1}{N}H_{1,N}(n)+\dfrac{1}{1-N}H_{N,N}(n)
    \end{align*}
    \end{thm}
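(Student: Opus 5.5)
The plan is to compare the three class numbers prime by prime using Proposition~\ref{thm:Localexp}. The only place where they differ is the local factor at the single prime $N$, and there the identity reduces to part (3) of Lemma~\ref{lem:linearlocalfactor}. I would treat three cases separately: $n=0$, $n>0$ with $-n$ not a discriminant, and the main case $-n=D_nf_n^2$ with $D_n$ fundamental.

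For $n=0$ I would use the definitions directly. Since $1\neq N$, the definition \eqref{eq:HellNdef} gives $H_{1,N}(0)=0$. By \eqref{eq:HNNdef},
\[
H_{N,N}(0)=L_N(-1,\mathrm{id})=\zeta(-1)(1-N)=-\tfrac{1-N}{12}.
\]
Hence the right-hand side equals $\tfrac{1}{1-N}\cdot\bigl(-\tfrac{1-N}{12}\bigr)=-\tfrac1{12}=H(0)$. When $n>0$ and $-n$ is not of the form $D_nf_n^2$ (that is, $n\not\equiv 0,3 \pmod 4$), all three quantities vanish by definition.

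In the main case I would apply \eqref{eq:PW} twice with $N$ prime, once with $m=1$ and once with $m=N$. This gives
\[
H_{1,N}(n)=L(0,\chi_{D_n})\prod_{p\neq N}D_p(n)\,A_N(n),\qquad
H_{N,N}(n)=L(0,\chi_{D_n})\prod_{p\neq N}D_p(n)\,C_N(n).
\]
By \eqref{eq:cl}, $H(n)=L(0,\chi_{D_n})\prod_{p}D_p(n)$. Here I need the observation that $D_p(n)=1$ whenever $p\nmid f_n$, because then $f_p=1$ and $\sigma(1/p)=0$. This lets the product in \eqref{eq:cl} run over all primes, so $H(n)=L(0,\chi_{D_n})\prod_{p\neq N}D_p(n)\cdot D_N(n)$.

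Factoring out the common term $L(0,\chi_{D_n})\prod_{p\neq N}D_p(n)$, the claim becomes
\[
D_N(n)=\tfrac{N+1}{N}A_N(n)+\tfrac{1}{1-N}C_N(n),
\]
which is exactly Lemma~\ref{lem:linearlocalfactor}(3) with $p=N$. The only point needing care is that the proof of that lemma writes $f_p=p^T$ with $T$ positive, so I would check the case $N\nmid f_n$ separately. With $T=0$ the geometric-sum computation still holds: $D_N=1$, and the right-hand side is
\[
\frac{N-\left(\frac{D_n}{N}\right)}{N-1}-\frac{1-\left(\frac{D_n}{N}\right)}{N-1}=1.
\]
I expect no real obstacle beyond this bookkeeping of edge cases ($n=0$ and $T=0$).
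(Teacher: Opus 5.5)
Your argument is correct and is essentially the paper's. The paper cites this result from Beckwith--Mono but recovers it as the case of $N$ an odd prime (oddness is needed for the definitions of $H_{m,N}$) of Theorem \ref{thm:CltoPW}, where $v(1)=\frac{N+1}{N}$ and $v(N)=\frac{1}{1-N}$; that theorem is proved by exactly your local-factor comparison, using Proposition \ref{thm:Localexp} and part (3) of Lemma \ref{lem:linearlocalfactor}, and your separate checks of $n=0$ and $N\nmid f_n$ are valid bookkeeping that the paper leaves implicit.
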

    
   Using the expressions for $H_{m,N}(n)$ above, we have the generalization
    \begin{thm}\label{thm:CltoPW}
        Let $N$ be an odd square-free integer. Then we have
        \begin{align*}
        H(n) &= \sum_{m|N} v(m) \cdot H_{m, N}(n) \\
        \text{where} \quad v(m) &= \left( \prod_{p \mid \frac{N}{m}} \frac{p+1}{p} \right) \left( \prod_{p \mid m} \frac{1}{1-p} \right).
        \end{align*}
    \end{thm}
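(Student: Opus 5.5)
The plan is to work prime by prime using the local factor expression of Proposition~\ref{thm:Localexp} together with part (3) of Lemma~\ref{lem:linearlocalfactor}, in the same spirit as the proof of Theorem~\ref{thm:PWtoLZ}. Write $-n = D_n f_n^2$ with $D_n$ fundamental. By \eqref{eq:cl},
\[
H(n) = L(0,\chi_{D_n}) \prod_{p} D_p(n),
\]
where the product may be taken over all primes: for $p \nmid f_n$ we have $f_p = 1$ and $D_p(n) = \sigma(1) = 1$. Split this product into the part over $p \nmid N$ and the part over $p \mid N$. By \eqref{eq:PW}, each $H_{m,N}(n)$ carries the same prefactor $L(0,\chi_{D_n}) \prod_{p\nmid N} D_p(n)$. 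It is therefore enough to prove the purely local identity
\[
\prod_{p \mid N} D_p(n) = \sum_{m \mid N} v(m) \prod_{p \mid m} C_p(n) \prod_{p \mid \frac{N}{m}} A_p(n).
\]

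For this, apply Lemma~\ref{lem:linearlocalfactor}(3) at each $p \mid N$:
\[
D_p(n) = \frac{p+1}{p} A_p(n) + \frac{1}{1-p} C_p(n).
\]
Expand the product over the primes dividing the square-free $N$. Each term corresponds to choosing, for every $p \mid N$, either the $C_p$ summand (these primes form $m$) or the $A_p$ summand (these primes form $N/m$). The resulting coefficient is
\[
\prod_{p\mid N/m}\frac{p+1}{p}\prod_{p\mid m}\frac{1}{1-p} = v(m),
\]
which gives the local identity. Multiplying back by $L(0,\chi_{D_n}) \prod_{p\nmid N} D_p(n)$ and using \eqref{eq:PW} for each $m$ yields the theorem for all $n>0$ for which $-n$ is a discriminant. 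If $-n$ is not a discriminant, both sides vanish, since every term in \eqref{eq:HNNdef} and \eqref{eq:HellNdef} and $H(n)$ itself is $0$. As a sanity check, for $N$ prime this reproduces Theorem~\ref{thm:BM1.1}.

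The main obstacle is the $n=0$ term, which is not covered by the local factor formalism. Here $H(0) = -\tfrac{1}{12}$ and $H_{m,N}(0) = 0$ for $m \neq N$, so the right-hand side reduces to
\[
v(N) H_{N,N}(0) = \prod_{p\mid N}\frac{1}{1-p}\, L_N(-1,\mathrm{id}).
\]
Since $L_N(-1,\mathrm{id}) = \zeta(-1)\prod_{p\mid N}(1-p)$, this equals $\zeta(-1) = -\tfrac{1}{12}$, as required. I would also double-check that \eqref{eq:PW} is valid for $m = N$, where the definition \eqref{eq:HNNdef} uses $\sigma_{N,1}$ rather than $\sigma_{m,N,1}$. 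These agree because $\sigma_{N,N,1} = \sigma_{N,1}$, so Lemma~\ref{lem:SintoP} applies with an empty product over $p \mid N/m$.
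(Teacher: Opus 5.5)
Your proposal is correct and follows essentially the same argument as the paper. The paper's proof also expands $\prod_{p\mid N}D_p(n)$ using Lemma~\ref{lem:linearlocalfactor}(3) and compares the result with the local-factor formula of Proposition~\ref{thm:Localexp}, exactly as in the proof of Theorem~\ref{thm:PWtoLZ}. Your separate checks of the $n=0$ term (via $L_N(-1,\mathrm{id})=\zeta(-1)\prod_{p\mid N}(1-p)$), of non-discriminants, and of the $m=N$ normalization fill in details that the paper leaves implicit in the word ``analogous''.
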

    \begin{proof}
        The proof is analogous to that of Theorem \ref{thm:PWtoLZ}, but uses the third claim of Lemma \ref{lem:linearlocalfactor}.
    \end{proof}

We also have a counterpart for the previous theorem in terms of the $H^{(m,\frac{N}{m})}(n)$ class numbers:
\begin{thm}
        Let $N$ be an odd square-free integer. Then we have 
        \begin{align*}
        H(n) &= \frac{1}{2^{\omega(N)}}\sum_{m|N}H^{(m, \frac{N}{m})}(n).
        \end{align*}
    \end{thm}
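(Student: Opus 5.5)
Work locally via Proposition \ref{thm:Localexp}, as in the proof of Theorem \ref{thm:PWtoLZ}. Treat $n=0$ separately first. Then $H(0)=-\frac{1}{12}$ and $H^{(m,N/m)}(0)=-\frac{1}{12}\prod_{p\mid m}(1-p)\prod_{p\mid N/m}(p+1)$. Summing over $m\mid N$ factors as
\[
-\tfrac{1}{12}\prod_{p\mid N}\bigl((1-p)+(p+1)\bigr)=-\tfrac{1}{12}\,2^{\omega(N)}.
\]
Dividing by $2^{\omega(N)}$ gives $H(0)$.

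For $n>0$, write $-n=D_nf_n^2$. By \eqref{eq:LSZ},
\[
H^{(m,\frac{N}{m})}(n)=L(0,\chi_{D_n})\prod_{p\nmid N}D_p(n)\prod_{p\mid m}C_p(n)\prod_{p\mid \frac{N}{m}}B_p(n).
\]
Summing over the divisors $m$ of the square-free $N$, the sum factors prime by prime (distributivity), so
\[
\sum_{m\mid N}H^{(m,\frac{N}{m})}(n)=L(0,\chi_{D_n})\prod_{p\nmid N}D_p(n)\prod_{p\mid N}\bigl(C_p(n)+B_p(n)\bigr).
\]
By part (4) of Lemma \ref{lem:linearlocalfactor}, $B_p(n)+C_p(n)=2D_p(n)$. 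The right side therefore equals
\[
2^{\omega(N)}L(0,\chi_{D_n})\prod_{p}D_p(n),
\]
where the product now runs over all primes.

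Finally, $D_p(n)=1$ whenever $p\nmid f_n$, since then $f_p=1$ and $\sigma(1/p)=0$. So the full product equals $\prod_{p\mid f_n}D_p(n)$, and by \eqref{eq:cl} the whole expression is $2^{\omega(N)}H(n)$. Dividing by $2^{\omega(N)}$ finishes the proof.

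The only point needing care is that \eqref{eq:cl} uses $\prod_{p\mid f_n}$ while \eqref{eq:LSZ} uses $\prod_{p\nmid N}$. The observation $D_p(n)=1$ for $p\nmid f_n$ reconciles the two, including for primes $p\mid N$ with $p\nmid f_n$. Apart from this, the argument is a direct application of the local-factor identity, so I do not expect any real obstacle.
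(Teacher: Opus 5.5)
Your proof is correct and is essentially the paper's own argument. The paper proves this "analogously to Theorem \ref{thm:PWtoLZ} using the last claim of Lemma \ref{lem:linearlocalfactor}", which is exactly your expansion of each $D_p(n)$, $p\mid N$, as $\tfrac12\bigl(B_p(n)+C_p(n)\bigr)$ followed by distributing over the divisors $m\mid N$, together with a direct check at $n=0$. The case $n\equiv 1,2 \bmod 4$, where both sides vanish trivially, is left implicit both by you and by the paper.
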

    \begin{proof}
        The proof is analogous to that of Theorem \ref{thm:PWtoLZ}, but uses the last claim of Lemma \ref{lem:linearlocalfactor}.
    \end{proof}

\subsection{Generalizations of Gauss' formula \ref{eq:threesquares}}.

Using equation \eqref{eq:r3}, we can express Gauss' formula \ref{eq:threesquares} in terms of generalized Hurwitz class numbers as follows:
$$
H^{(1,1)}(4n)-2H^{(1,1)}(n)=H^{(2,1)}(4n)
$$
Therefore, we can generalize Gauss' formula \eqref{eq:G1} as the following Proposition:
\begin{prop}\label{prop:G1}
   Let $N_1,N_2$ be two odd coprime square-free integers and $p$ be a prime number with $(p,N_1N_2)=1$. We have
   \begin{equation}\label{eq:G1}
       H^{(N_1,N_2)}(p^2 n)\;-\;p\,H^{(N_1,N_2)}(n)
=
H^{(N_1\cdot p,N_2)}(p^2 n).
   \end{equation}
\end{prop}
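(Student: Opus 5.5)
The plan is to compare both sides prime by prime using the local-factor factorization of Proposition \ref{thm:Localexp}. Replacing $n$ by $p^2n$ should change only the local factor at $p$, and the identity should then reduce to a one-line computation with the divisor-sum factor $D_p$.

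\textbf{Step 1 (trivial cases).} If $-n$ is not a discriminant, then neither is $-p^2n$, because for odd $p$ we have $p^2n\equiv n \pmod 4$; all three terms then vanish. If $n=0$, both sides are constants. The left side is $(1-p)H^{(N_1,N_2)}(0)=-\frac{1}{12}(1-p)\prod_{q\mid N_1}(1-q)\prod_{q\mid N_2}(q+1)$, which is exactly the definition of $H^{(N_1p,N_2)}(0)$. I will assume $p$ is odd, so that $N_1p$ is an admissible odd square-free index. The case $p=2$ would need the analogous $4n$-normalization of the Remark after Theorem \ref{thm:L&Z}, and I would treat it separately if it is needed.

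\textbf{Step 2 (factorization).} Let $n>0$ with $-n=D_nf_n^2$. Then $-p^2n=D_n(pf_n)^2$, so $D_{p^2n}=D_n$ and $f_{p^2n}=pf_n$. The $q$-part of $f$ is unchanged for every prime $q\ne p$. Hence all local factors $A_q,B_q,C_q,D_q$ at $q\neq p$ agree for $n$ and $p^2n$, and so does $L(0,\chi_{D_n})$.

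Apply \eqref{eq:LSZ} with $m=N_1$ and $N=N_1N_2$, where $p\nmid N$. This gives
\[
H^{(N_1,N_2)}(p^2n)-pH^{(N_1,N_2)}(n)=L(0,\chi_{D_n})\,R\,\bigl(D_p(p^2n)-pD_p(n)\bigr),
\]
where $R$ collects the common factors at $q\ne p$. Applying \eqref{eq:LSZ} with $m=N_1p$ gives
\[
H^{(N_1p,N_2)}(p^2n)=L(0,\chi_{D_n})\,R\,C_p(p^2n),
\]
and here $C_p(p^2n)=C_p(n)=1-\left(\frac{D_n}{p}\right)$. So it suffices to show $D_p(p^2n)-pD_p(n)=1-\left(\frac{D_n}{p}\right)$.

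\textbf{Step 3 (local computation).} Write $f_p=p^T$ with $T\ge 0$, using the convention $\sigma(1/p)=0$, and set $\chi=\left(\frac{D_n}{p}\right)$. Then
\[
D_p(p^2n)-pD_p(n)=\bigl(\sigma(p^{T+1})-p\sigma(p^T)\bigr)-\chi\bigl(\sigma(p^T)-p\sigma(p^{T-1})\bigr).
\]
Each bracket equals $1$ by the geometric sum, including the case $T=0$. This gives $1-\chi=C_p(n)$, as required.

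The only delicate point is the bookkeeping in Step 2. Proposition \ref{thm:Localexp} must be applied with $p$ treated as a prime not dividing $N$ on the left side, but as a prime dividing $m$ on the right side. One must also check that the $p$-part of $f_{N_1p,N_2}$ is removed consistently, which is exactly how the proof of Proposition \ref{thm:Localexp} handles primes dividing $N$.
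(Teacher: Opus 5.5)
For odd $p$ your argument is correct and is essentially the paper's. You factor both sides with Proposition~\ref{thm:Localexp}, note that only the local factor at $p$ changes, and reduce to $D_p(p^2n)-pD_p(n)=C_p(n)$. Your direct geometric-sum check of that identity is equivalent to the paper's use of Lemma~\ref{lem:linearlocalfactor}(3) together with $A_p(p^2n)=pA_p(n)$. Your separate check of $n=0$ covers a point the paper leaves implicit.

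The gap is that you set aside $p=2$. Because $N_1N_2$ is odd, $(2,N_1N_2)=1$ always holds, so $p=2$ is part of the statement. It is in fact the motivating case. With $N_1=N_2=1$ it is Gauss' formula $H^{(1,1)}(4n)-2H^{(1,1)}(n)=H^{(2,1)}(4n)$ via \eqref{eq:r3}. The $p=2$ instance with $N_1=p$ supplies the left-hand side of Corollary~\ref{cor:4.9}, and hence Theorem~\ref{thm:Main4}. No ``$4n$-normalization'' is involved. The paper simply applies the defining formula of $H^{(N_1,N_2)}$ with an even first index (Kronecker symbol at $2$), exactly as in \eqref{eq:r3}. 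The factorization \eqref{eq:LSZ} with $2\mid m$ then holds by the same proof, since $f_{m,N/m}$ strips the $2$-part of $f_n$ and $\left(\frac{D_n u^2}{2}\right)=\left(\frac{D_n}{2}\right)$ for odd $u$.

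Concretely, your Step 1 fails at $p=2$. If $n\equiv 1,2 \pmod 4$, then $-n$ is not a discriminant but $-4n$ is. So $H^{(N_1,N_2)}(n)=0$, while $H^{(N_1,N_2)}(4n)$ and $H^{(2N_1,N_2)}(4n)$ are in general nonzero; for $N_1=N_2=1$, $n=1$, both equal $H(4)=\frac12$. You must show these two coincide. Write $-4n=D_{4n}f_{4n}^2$. Then $f_{4n}$ must be odd, since otherwise $-n=D_{4n}(f_{4n}/2)^2$ would be a discriminant. Hence $4\mid D_{4n}$, so $\left(\frac{D_{4n}}{2}\right)=0$ and $D_2(4n)=1=C_2(4n)$, and the two local-factor expressions agree. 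For $n\equiv 0,3\pmod 4$, your Steps 2 and 3 go through verbatim with $p=2$, since $\sigma(2^{T+1})-2\sigma(2^T)=1$. Adding these two observations closes the gap.
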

\begin{proof}
\textbf{Case 1:} Assume $-n$ is not a discriminant (i.e. $n \equiv 1,2 \mod 4$).

If $p \neq 2$, then $-p^2 n$ is also not a discriminant. Hence all terms in \eqref{eq:G1} vanish, and the identity holds trivially.

If $p = 2$, then $H^{(N_1,N_2)}(n) = 0$, and by Theorem~\ref{thm:Localexp},
$$
H^{(N_1,N_2)}(4n)
=
\frac{D_2(4n)}{C_2(4n)} \, H^{(2N_1,N_2)}(4n).
$$
Write $-4n = D_{4n} f_{4n}^2$, where $D_{4n}$ is a fundamental discriminant. We claim that $4 \mid D_{4n}$. Indeed, otherwise we could write
$$
-n = D_{4n} \left(\frac{f_{4n}}{2}\right)^2,
$$
which contradicts the assumption that $-n$ is not a discriminant. It follows that
$$
D_2(4n) = C_2(4n) = 1,
$$
and therefore \eqref{eq:G1} also holds in this case.

\medskip

\textbf{Case 2:} Assume $-n$ is a discriminant (i.e. $n \equiv 0,3 \mod 4$).

In this case, we have
$$
-p^2 n = D_n (p f_n)^2.
$$
Moreover, for any prime $q \neq p$, the $q$-parts of $f_{p^2 n}$ and $f_n$ coincide. Consequently,
$$
A_q(p^2 n)=A_q(n), \quad
B_q(p^2 n)=B_q(n), \quad
C_q(p^2 n)=C_q(n), \quad
D_q(p^2 n)=D_q(n)
$$
for all $q \neq p$.

Using Proposition~\ref{thm:Localexp} and the assumption $(p, N_1N_2)=1$, we obtain
\begin{align*}
H^{(N_1,N_2)}(p^2 n)
&=
L(0,\chi_{D_n})
\prod_{\substack{q\nmid N_1N_2}} D_q(p^2 n)
\prod_{q\mid N_1} C_q(p^2 n)
\prod_{q\mid N_2} B_q(p^2 n) \\
&=
\left(
L(0,\chi_{D_n})
\prod_{\substack{q\nmid N_1N_2\\ q\neq p}} D_q(n)
\prod_{q\mid N_1} C_q(n)
\prod_{q\mid N_2} B_q(n)
\right)\cdot
D_p(p^2 n),
\end{align*}
\begin{align*}
    H^{(N_1,pN_2)}(p^2 n)
&=
L(0,\chi_{D_n})
\prod_{\substack{q\nmid pN_1N_2}} D_q(p^2 n)
\prod_{q\mid pN_1} C_q(p^2 n)
\prod_{q\mid N_2} B_q(p^2 n)\\
&=
\left(
L(0,\chi_{D_n})
\prod_{\substack{q\nmid N_1N_2\\ q\neq p}} D_q(p^2 n)
\prod_{q\mid N_1} C_q(n)
\prod_{q\mid N_2} B_q(n)
\right)\cdot
C_p(n),
\end{align*}

and
\begin{align*}
H^{(N_1,N_2)}(n)
&=
\left(
L(0,\chi_{D_n})
\prod_{\substack{q \nmid N_1N_2\\ q\neq p}} D_q(n)
\prod_{q \mid N_1} C_q(n)
\prod_{q \mid N_2} B_q(n)
\right)\cdot
D_p(n).
\end{align*}

Therefore, Equation \eqref{eq:G1} is equivalent to
$$
D_p(p^2n)-pD_p(n)=C_p(n).
$$
We verify this identity directly:
\begin{align*}
D_p(p^2n)-pD_p(n)
&=
\frac{p+1}{p}A_p(p^2n)+\frac{1}{1-p}C_p(p^2n)-pD_p(n) \\
&=
\frac{p+1}{p}\,pA_p(n)+\frac{1}{1-p}C_p(n)-pD_p(n) \\
&=
p\left(\frac{p+1}{p}A_p(n)+\frac{1}{1-p}C_p(n)\right)-pD_p(n)+C_p(n) =C_p(n).
\end{align*}
Note that we use Lemma \ref{lem:linearlocalfactor} in the last line.
\end{proof}

\textbf{Remark:} From the above argument, we also observe that if $p \neq 2$, or if $p=2$ and $-n$ is a discriminant, then one may replace
$H^{(pN_1,N_2)}(p^2n)$ by $H^{(pN_1,N_2)}(n)$ in \ref{eq:G1}.

\begin{cor}\label{cor:4.9}
Let $p \neq 2$ be a prime, and let $N_2$ be an odd square-free integer such that $(N_2,2p)=1$. Then 
$$
H^{(p,N_2)}(4n)-2H^{(p,N_2)}(n)
=
H^{(2,N_2)}(4p^2n)-pH^{(2,N_2)}(4n).
$$
\end{cor}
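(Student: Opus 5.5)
The plan is to apply Proposition~\ref{prop:G1} twice, once with the prime $2$ and once with the prime $p$, and then eliminate the common term. Throughout we use the obvious extension of the Li--Skoruppa--Zhou class numbers in which $2$ is allowed to divide the first index, defined by the same local factors as in Proposition~\ref{thm:Localexp}. The proof of Proposition~\ref{prop:G1} never used that the prime was odd; indeed it treats $p=2$ explicitly.

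\textbf{Step 1.} Take $N_1=p$, $N_2$ as given, and the prime $2$, which is coprime to $pN_2$. Proposition~\ref{prop:G1} gives
\[
H^{(p,N_2)}(4n)-2H^{(p,N_2)}(n)=H^{(2p,N_2)}(4n).
\]

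\textbf{Step 2.} Take $N_1=2$, $N_2$ as given, and the prime $p$, which is coprime to $2N_2$ since $p\neq 2$. Applying Proposition~\ref{prop:G1} at the argument $4n$ gives
\[
H^{(2,N_2)}(4p^2n)-pH^{(2,N_2)}(4n)=H^{(2p,N_2)}(4p^2n).
\]

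\textbf{Step 3.} It remains to identify $H^{(2p,N_2)}(4p^2n)$ with $H^{(2p,N_2)}(4n)$. This is exactly the Remark after Proposition~\ref{prop:G1}: since $p\neq 2$, one may replace $H^{(pN_1,N_2)}(p^2m)$ by $H^{(pN_1,N_2)}(m)$. Here we take $N_1=2$ and $m=4n$.

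To justify this directly, note that if $-4n$ is not a discriminant, then neither is $-4p^2n$, and both sides vanish. Otherwise $-4p^2n=D_{4n}(pf_{4n})^2$ has the same fundamental discriminant $D_{4n}$. All local factors at primes $q\neq p$ coincide for the two arguments. At $p$ the factor is $C_p$, which depends only on $D_{4n}$ and hence is unchanged.

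Combining Steps 1--3, both sides of the corollary equal $H^{(2p,N_2)}(4n)$, which proves the claim.

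\textbf{Main obstacle.} The delicate point is making sure that the index-$2$ class numbers are covered by the earlier results, in particular the local factorization of Proposition~\ref{thm:Localexp} with $2\mid N_1$. The paper states these results only for odd indices, yet it uses $H^{(2,1)}$ in \eqref{eq:r3}. If needed, I would verify the $2$-adic case separately, reusing the Case 1 argument from Proposition~\ref{prop:G1}.
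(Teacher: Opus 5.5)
Your proposal is correct and follows the paper's own argument. The paper proves the corollary in one line from Proposition~\ref{prop:G1} and the remark after it; your three steps spell that out: apply it with the prime $2$ and $N_1=p$, then with the prime $p$ and $N_1=2$ at argument $4n$, and finally replace $H^{(2p,N_2)}(4p^2n)$ by $H^{(2p,N_2)}(4n)$. You are also right that this tacitly extends the Li--Skoruppa--Zhou class numbers and Proposition~\ref{thm:Localexp} to an even first index, which the paper likewise does implicitly (e.g.\ in \eqref{eq:r3}).
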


\begin{proof}
This follows directly from Proposition~\ref{prop:G1} together with the preceding remark.
\end{proof}

\textbf{Remark:}
The genus identity of Berkovich and Jagy~\cite[Theorem~1.3]{BJ}
$$
r_3(p^2 n) - p r_3(n)
=
48 \sum_{f \in TG_{1,p}} \frac{R_f(n)}{|\mathrm{Aut}(f)|}
\;-\;
96 \sum_{f \in TG_{2,p}} \frac{R_f(n)}{|\mathrm{Aut}(f)|}.
$$
arises as a special case of the above proposition when $N_2=1$ and the equation \ref{eq:r3}; see also \cite[Section~7.2]{LuoZhou} for further details.

On the other hand, Beckwith and Mono \cite{Beckwith-Mono-2} using theta lift to justify the terminology ``generalized class numbers'' of $H_{m,N}(n)$ by the following result:
\begin{thm}[\cite{Beckwith-Mono-2} Theorem 1.4]\label{thm:BM4}
Let $p$ be prime.
If $n>0$ with $n \equiv 0,3 \mod 4$, then we have
$$
\sum_{Q \in Q_{p,-n}/\Gamma_0(p)} \frac{2}{|\Gamma_0(p)_Q|}
=
\frac{4(p+1)}{p} H_{1,p}(n)
-
\frac{2(p+1)}{p-1} H_{p,p}(n).
$$
\end{thm}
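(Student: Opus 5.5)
The plan is to prove the identity through the local-factor calculus of Proposition~\ref{thm:Localexp} and Lemma~\ref{lem:linearlocalfactor}, rather than through theta lifts. The argument splits into two steps. First, evaluate the right-hand side in local factors. Second, show separately that the left-hand side equals $2H^{(1,p)}(n)$.

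\textbf{Step 1: the right-hand side.} Write $-n=D_nf_n^2$. By \eqref{eq:PW}, both $H_{1,p}(n)$ and $H_{p,p}(n)$ share the factor $L(0,\chi_{D_n})\prod_{q\ne p}D_q(n)$. The remaining local factor at $p$ is $A_p(n)$ for $H_{1,p}$ and $C_p(n)$ for $H_{p,p}$. So the right-hand side equals this common factor times
\[
\tfrac{4(p+1)}{p}A_p(n)-\tfrac{2(p+1)}{p-1}C_p(n).
\]
Part (3) of Lemma~\ref{lem:linearlocalfactor} gives $\tfrac{p+1}{p}A_p=D_p+\tfrac{1}{p-1}C_p$. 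Substituting, the factor becomes $4D_p(n)-2C_p(n)$. Part (4) of the same lemma says $2D_p-C_p=B_p$, so the factor is $2B_p(n)$. By \eqref{eq:LSZ}, the right-hand side is therefore exactly $2H^{(1,p)}(n)$.

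\textbf{Step 2: the left-hand side.} Both $Q$ and $-Q$ lie in $\mathcal Q_{p,-n}$, and $\pm I$ lies in every stabilizer. Hence the left-hand side is $2\sum_{Q\ \mathrm{pos.\ def.}}1/|\overline{\Gamma_0(p)}_Q|$, where the bar denotes the image in $\mathrm{PSL}_2$. It therefore suffices to show that this weighted count of positive definite classes equals $H^{(1,p)}(n)$.

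To do this, I would fibre $\Gamma_0(p)$-classes over $\mathrm{SL}_2(\mathbb Z)$-classes. The quotient $\Gamma_0(p)\backslash \mathrm{SL}_2(\mathbb Z)$ is identified with $\mathbb P^1(\mathbb F_p)$, and a form $Q\circ\gamma$ has $p\mid a$ exactly when the corresponding point of $\mathbb P^1(\mathbb F_p)$ is a zero of $Q$ mod $p$. Orbit--stabilizer then gives
\[
\sum_{Q\in\mathcal Q^+_{p,-n}/\Gamma_0(p)}\frac{1}{|\overline{\Gamma_0(p)}_Q|}=\sum_{Q\in \mathcal Q^+_{1,-n}/\mathrm{SL}_2(\mathbb Z)}\frac{\#\{x\in\mathbb P^1(\mathbb F_p):Q(x)\equiv0\}}{|\overline{\mathrm{SL}_2(\mathbb Z)}_Q|}.
\]
Next I would stratify by content: write $Q=tQ_0$ with $Q_0$ primitive of discriminant $-n/t^2$.
\begin{itemize}
\item If $p\mid t$, then $Q$ vanishes identically mod $p$ and contributes $p+1$ zeros.
\item If $p\nmid t$, the number of zeros equals that of $Q_0$ mod $p$. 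This is $1+\left(\frac{-n/t^2}{p}\right)$, which is interpreted as $1$ when $p$ divides the discriminant.
\end{itemize}
The primitive weighted class numbers are multiplicative in the conductor; their local factor at $p$ is the standard one, $p^{k}\bigl(1-\chi_{D_n}(p)/p\bigr)$ for conductor $p$-part $p^k$ with $k\ge1$. Summing over $t$ with the zero counts as weights, the factors at primes $q\ne p$ reproduce $L(0,\chi_{D_n})\prod_{q\ne p}D_q(n)$, exactly as in the derivation of \eqref{eq:cl}. What remains is a purely $p$-local sum, which I must show equals $B_p(n)$ with $f_p=p^T$.

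\textbf{Sanity check.} For $T=0$ the local sum is just $1+\chi_{D_n}(p)$, and $B_p=1+\chi_{D_n}(p)$ there, so the two agree.

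\textbf{Main obstacle.} The hardest part is the general case $T\ge1$. There the $p$-local sum is a geometric-type sum over $j=0,\dots,T$: the contents $p^j$ with $j\ge1$ contribute weight $p+1$, the $j=0$ term contributes weight equal to the number of zeros of a primitive form whose discriminant has conductor $p^T$, and each term carries the primitive local counts. I would compute it in closed form, as in the proof of part (3) of Lemma~\ref{lem:linearlocalfactor}, and match it with $B_p(n)=\bigl(2p^{T+1}-p-1-\chi_{D_n}(p)(2p^T-p-1)\bigr)/(p-1)$. The delicate bookkeeping is the conventions $1/2$ and $1/3$ for the stabilizer weights, together with the fact that the primitive zero count is $1$ whenever $p$ divides the discriminant.
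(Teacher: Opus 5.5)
Your proposal is correct, and it takes a genuinely different route from the paper. The paper does not prove this statement; it imports it from Beckwith--Mono, where it comes out of their theta-lift machinery.

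\textbf{Step 1.} This is right: $\tfrac{4(p+1)}{p}A_p-\tfrac{2(p+1)}{p-1}C_p=4D_p-2C_p=2B_p$, so the right-hand side is $2H^{(1,p)}(n)$. The same conclusion also follows from Theorem~\ref{thm:PWtoLZ} with $N=p$. You used $B_p$ with $2pf_p$ in its first term. That is the version forced by Lemma~\ref{lem:linearlocalfactor}(1) and by the Li--Skoruppa--Zhou definition; the paper's displayed $B_p(n)$ has a typo there. Your reading of the factor $2$ (both $\pm Q$ are counted, and $\pm I$ lies in every stabilizer) matches the paper's example with $p=31$, $n=43$.

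\textbf{The step you flag as the main obstacle.} It closes, and it needs no delicate weight bookkeeping. Put $h_w(\Delta)=2h(\Delta)/w(\Delta)$. The class number formula for orders gives $h_w(Df^2)=L(0,\chi_D)\,f\prod_{q\mid f}(1-\chi_D(q)/q)$, which absorbs the weights $1/2$ and $1/3$. Moreover, $tQ_0$ has the same stabilizer as $Q_0$. With $\chi=\chi_{D_n}(p)$ and $T\ge 1$, the $p$-local sum is
\[
p^T\Bigl(1-\frac{\chi}{p}\Bigr)+(p+1)\Bigl(1-\frac{\chi}{p}\Bigr)\sum_{k=1}^{T-1}p^k+(p+1)=\frac{2p^{T+1}-p-1-\chi\,(2p^T-p-1)}{p-1}=B_p(n).
\]
At primes $q\neq p$, the local sums $1+(q-\chi_{D_n}(q))\frac{q^{T_q}-1}{q-1}$ equal $D_q(n)$.

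Two minor points. With the right action $Q\circ\gamma$, the relevant coset space is $\mathrm{SL}_2(\Z)/\Gamma_0(p)$, read off from first columns. Also, $p$ must be odd for $H_{1,p}$ and $H_{p,p}$ to be defined in this paper.

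\textbf{What each route buys.} The theta-lift proof places the identity in a modular framework. That framework justifies calling $H_{m,N}$ ``generalized class numbers'' and connects them to the (sesqui)harmonic completions. However, it is heavy machinery, and it is the only non-elementary input in the paper's proof of Theorem~\ref{thm:Main4}.

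Your argument is elementary and self-contained. It isolates a sharper intermediate fact: the $\mathrm{PSL}_2$-weighted count of positive definite $\Gamma_0(p)$-classes of discriminant $-n$ is exactly the Li--Skoruppa--Zhou number $H^{(1,p)}(n)$. This makes the link to Eichler orders and ternary forms transparent.

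It also extends directly to odd square-free $N$. Replace $\mathbb{P}^1(\mathbb{F}_p)$ by $\mathbb{P}^1(\Z/N\Z)$; zero counts and content weights factor over $p\mid N$ by CRT, giving $H^{(1,N)}(n)$. Combined with Corollary~\ref{thm:harmonic-Maass-form}, this yields mock modularity of the $\Gamma_0(N)$ class number generating function without theta lifts.
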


Note that the factor of $2$ on the left-hand side is absent in their formulation, since they compute stabilizer sizes in $\operatorname{PSL}_2(\mathbb{Z})$, while we work with classes under $\operatorname{SL}_2(\mathbb{Z})$.
Using Theorem \ref{thm:BM4} we can prove Theorem \ref{thm:Main4} mentioned at the beginning.

\begin{proof}[Proof of Theorem \ref{thm:Main4}]
    By Theorem \ref{thm:BM4}, \ref{thm:BM1.1}, and \ref{thm:PWtoLZ}, we can rewrite the sum as:

\begin{align*}
\sum_{Q \in \mathcal{Q}_{p,-n}/\Gamma_0(p)}
\frac{1}{|\Gamma_0(p)_Q|}
&=
4\left(
H(n)
-
\frac{1}{1-p} H_{p,p}(n)
\right)
-
\frac{2(p+1)}{p-1} H_{p,p}(n) \\
&= 4H(n) - 2H_{p,p}(n) \\
&= 4H(n) - 2H^{(p,1)}(n).
\end{align*}

Therefore we have 
\begin{align*}
12\sum_{Q \in \mathcal{Q}_{p,-4n}/\Gamma_0(p)}
&\frac{1}{|\Gamma_0(p)_Q|}
-
24
\sum_{Q \in \mathcal{Q}_{p,-n}/\Gamma_0(p)}
\frac{1}{|\Gamma_0(p)_Q|} \\
&=
4\cdot 12\cdot\bigl(H(4n) - 2H(n)\bigr)
-
2\cdot 12\cdot \bigl(H^{(p,1)}(4 n) - 2H^{(p,1)}(n)\bigr) \\
&=
4r_3(n)
-
2 \big(r_3(p^2 n) - p r_3(n)\big),
\end{align*}
where in the last equation, we use Corollary \ref{cor:4.9} with $N_2=1$. 
\end{proof}

\section*{Acknowledgments}

The author is deeply grateful to Olivia Beckwith for introducing this problem and for her continued guidance, as well as for many helpful comments and suggestions throughout this work. The author also thanks Andreas Mono, Ben Kane, Kathrin Bringmann, and Edna Jones for their valuable and insightful comments.


\end{document}